\newcommand{\floor}[1]{\ensuremath{\left\lfloor #1\right\rfloor} }
\newcommand{\ceil}[1]{\ensuremath{\left\lceil #1\right\rceil} }
\newcommand{\size}[1]{\left \vert #1 \right \vert}
\newcommand{\comment}[1]{}
\newtheorem{lemma}{Lemma}[section]
\newtheorem{theorem}[lemma]{Theorem}
\newtheorem{corollary}[lemma]{Corollary}
\theoremstyle{definition}
\newtheorem{definition}[lemma]{Definition}
\DeclareMathOperator{\cinf}{c_{\infty}}
\def\cart{\, \Box \,}
\newcommand{\cnum}{c}
\begin{document}
\title{Catching an infinitely fast robber on a grid}

\author{William B. Kinnersley}
\address{Department of Mathematics and Applied Mathematical Sciences, University of Rhode Island, University of Rhode Island, Kingston, RI, USA, 02881}
\email{\tt billk@uri.edu}

\author{Nikolas Townsend}
\address{Department of Mathematics and Applied Mathematical Sciences, University of Rhode Island, University of Rhode Island, Kingston, RI, USA, 02881}
\email{\tt townsendn@uri.edu}

\subjclass[2020]{Primary 05C57}
\keywords{pursuit-evasion games, cops and robbers, Cartesian grids, hypercubes}

\begin{abstract}
We consider a variant of Cops and Robbers in which the robber may traverse as many edges as he likes in each turn, with the constraint that he cannot pass through any vertex occupied by a cop.  We study this model on several classes of grid-like graphs.  In particular, we determine the cop numbers for two-dimensional Cartesian grids and tori up to an additive constant, and we give asymptotic bounds for the cop numbers of higher-dimensional grids and hypercubes. 
\end{abstract}

\maketitle

\begin{section}{Introduction}

{\it Cops and Robbers} is a well-studied model of pursuit and evasion in which a team of one or more cops attempts to capture a single robber on a graph $G$.  The cops and robber occupy vertices of $G$; multiple cops may occupy the same vertex, and both players know the positions of the cops and robber.  For convenience, throughout the paper we use the pronouns he/him/his to refer to the robber and she/her/hers to refer to the cops. The cops begin the game by choosing their initial positions, after which the robber chooses his initial position in response.  Thenceforth the game proceeds in {\it rounds}, each consisting of a {\it cop turn} followed by a {\it robber turn}.  On the cops' turn, each cop may either remain on her current vertex or move to a neighboring vertex; likewise, on the robber's turn, he may remain on his current vertex or move to a neighboring vertex.  If any cop ever occupies the same vertex as the robber, we say that the cop {\it captures} the robber, and the cops win.  Conversely, the robber wins if he can perpetually evade capture.

Given a graph $G$, it is natural to ask how many cops are needed to guarantee that the cops win the game on $G$.  The {\it cop number} of $G$, denoted $\cnum(G)$, is the minimum number of cops needed to guarantee capture of a robber on $G$ no matter how the robber plays.  Quilliot~\cite{Qui78}, as well as Nowakowski and Winkler~\cite{NW83}, independently introduced the game of Cops and Robbers and characterized graphs with cop number 1.  Aigner and Fromme~\cite{AF84} first introduced the notion of the cop number; since then, the cop number has been extensively studied. For more background on the game, we direct the reader to~\cite{BN11}.

Cops and Robbers has spawned many variants, each of which models pursuit and evasion in a slightly different context.  One natural variant permits the robber to move faster than the cops: a robber with \textit{speed $s$} (for some positive integer $s$) may, on each robber turn, traverse any cop-free path of length at most $s$; in other words, the robber may take up to $s$ steps on his turn, provided that he never passes through a vertex occupied by a cop.  (Note that the case $s=1$ corresponds to the original model of Cops and Robbers.)  A robber with \textit{speed $\infty$} may follow any cop-free path he likes, regardless of its length.  As in the usual model of Cops and Robbers, it is natural to ask how many cops are needed to capture a robber with speed $s$; this quantity is the {\it speed-$s$ cop number} of $G$, denoted $c_s(G)$.  When $s=\infty$, we refer to $c_{\infty}(G)$ as the {\it infinite-speed cop number}.

%The speed-$s$ cop number was formally introduced by Fomin, Golovach, Kratovchv\'{i}l, Nisse, and Suchan in \cite{FGKNS10}.  They showed that when $s \ge 2$, computing $c_{s}$ in NP-hard in general but can be accomplished in polynomial time on split graphs; additionally, they proved that $c_2(P_n \cart P_n) = \Omega(\sqrt{\log n})$, thereby showing that $c_2$ is unbounded on the class of planar graphs (in contrast to the usual cop number, which is bounded above by 3 \cite{AF84}).  Frieze, Krivilevich, and Loh \cite{FKL12} showed that $c_s(G)$ can be as large as $k\size{V(G)}^{(s-3)/(s-2)}$ for some constant $k$; the former result was subsequently improved by Alon and Mehrabian \cite{AM11}, who constructed, for each positive integer $s$, a family of $n$-vertex graphs having speed-$s$ cop number in $\Omega(n^{s/(s+1)})$.

%The infinite-speed cop number in particular has received a great deal of attention.  $c{\infty}$ has been studied on many classes of graphs, including planar graphs \cite{AM15}, chordal and interval graphs \cite{Meh15}, expander graphs \cite{Meh12}, random graphs \cite{AM15, Meh12}, and Cartesian products of paths and cycles \cite{Meh12}.  Those graphs for which $\cinf(G) = 1$ were characterized in \cite{Meh12}, and a construction of graphs for which $\cinf(G) = \Theta(V(G))$ was given in \cite{FKL12}.  Results on the computational complexity of determining $\cinf(G)$ appear in \cite{FGKNS10} and \cite{DGK19}.

The speed-$s$ cop number was formally introduced by Fomin, Golovach, Kratovchv\'{i}l, Nisse, and Suchan in \cite{FGKNS10}.  They investigated the computational complexity of computing $c_s$ when $s \ge 2$; additionally, they proved that $c_2(P_n \cart P_n) = \Omega(\sqrt{\log n})$, thereby showing that $c_2$ is unbounded on the class of planar graphs (in contrast to the usual cop number, which is bounded above by 3 \cite{AF84}).  The latter result was extended by Balister, Bollob\'{a}s, Narayanan, and Shaw \cite{BBNS17}, who showed that whenever $s$ exceeds some absolute constant, there exists some $\eta$ such that $c_s(P_n \cart P_n) \ge \textrm{exp}\left(\frac{\eta \log n}{\log \log n}\right)$ when $n$ is sufficiently large.  Frieze, Krivilevich, and Loh \cite{FKL12} constructed $n$-vertex graphs with speed-$s$ cop number in $\Omega(n^{(s-3)/(s-2)})$; this result was subsequently improved by Alon and Mehrabian \cite{AM11}, who constructed graphs with speed-$s$ cop number in $\Omega(n^{s/(s+1)})$.  The infinite-speed cop number, $\cinf$, has received a particularly great deal of attention.  It has been studied on many classes of graphs, including planar graphs \cite{AM15}, chordal graphs \cite{Meh15}, interval graphs \cite{Meh15}, expander graphs \cite{Meh12}, random graphs \cite{AM15, Meh12}, and Cartesian products of paths and cycles \cite{Meh12}.  Those graphs for which $\cinf(G) = 1$ were characterized in \cite{Meh12}, and a construction of graphs satisfying $\cinf(G) = \Theta(V(G))$ was given in \cite{FKL12}.  Results on the computational complexity of determining $\cinf(G)$ appear in \cite{DGK19} and \cite{FGKNS10}.

In this paper, we analyze the infinite-speed game on several families of Cartesian products of paths and cycles, extending earlier work of Mehrabian in \cite{Meh12}.  Mehrabian showed that when $G$ is the $k$-fold Cartesian product of $n$-vertex paths -- that is, the $k$-dimensional Cartesian grid -- we have $\frac{1}{4k^2}n^{k-1} \le \cinf(G) \le n^{k-1}$; when $G$ is the $k$-fold Cartesian product of $n$-vertex cycles, he showed $\frac{1}{2k^2}n^{k-1} \le \cinf(G) \le 2n^{k-1}$.  Finally, when $G$ is the $k$-dimensional hypercube -- that is, the $k$-fold product of $P_2$ -- he proved that for some constants $\eta_1$ and $\eta_2$, we have $\frac{\eta_1}{k^{3/2}}2^k \le \cinf(G) \le \frac{\eta_2}{k}2^k$.  We aim to tighten these bounds.

In Section \ref{sec:2D_grids}, we consider two-dimensional Cartesian grids.  We show in Theorem \ref{thm:2D_grids} that $\cinf(P_n \cart P_n)$ is $n-1$ when $n$ is odd, and either $n-1$ or $n$ when $n$ is even.  Similar arguments show that the infinite-speed cop number for the two-dimensional Cartesian torus, $C_n \cart C_n$, is between $2n-24$ and $2n$ (provided $n \ge 18$); see Theorem \ref{thm:torus}.  Section \ref{sec:3D_grids} deals with higher-dimensional grids, on which the game is substantially more complex.  Our main result in this section is that for sufficiently large $n$, we have $0.7172n^2 \le c_{\infty}(P_n \cart P_n \cart P_n) \le (0.75+o(1))n^2$.  (Refer to Theorem \ref{thm:3D_grid_upper} for the upper bound and Theorem \ref{thm:3D_grid_lower} for the lower bound.)  Our upper bound generalizes to grids of higher dimension: see Theorem \ref{thm:4D_grid_upper}.  As a consequence of Theorem \ref{thm:3D_grid_lower}, we also obtain a lower bound on the treewidth of $P_n \cart P_n \cart P_n$; see Corollary \ref{cor:treewidth}.  In Section \ref{sec:hypercubes}, we consider hypercubes.  We use a potential function argument to improve upon the lower bound from \cite{Meh12}: our Theorem \ref{thm:hypercubes} states that when $G$ is the $k$-dimensional hypercube, $c_{\infty}(G) \ge \frac{2^{k-3}}{k\ln k}$.  Finally, in Section \ref{sec:conclusion}, we propose several open problems and directions for future study.\\

% If needed, put a glossary of sorts here.

\end{section}

\begin{section}{Two-Dimensional Grids}\label{sec:2D_grids}

We begin the paper with a discussion of the infinite-speed robber game played on two-dimensional Cartesian grids, i.e. graphs of the form $P_m \cart P_n$.  Maamoun and Meyniel \cite{MM87} showed that in the speed-1 game, two cops suffice to capture a robber on any two-dimensional grid.  One might expect that as the speed of the robber increases, so too does the number of cops needed to capture him.  This is indeed the case: Fomin et al. \cite{FGKNS10} showed that $c_2(P_n \cart P_n) = \Omega(\sqrt{\log n})$.  In this section, we determine the infinite-speed cop number of $P_n \cart P_n$ up to an additive constant; in particular, we show that $c_{\infty}(P_n \cart P_n) \in \{n-1,n\}$.

We view the vertex set of $P_n \cart P_n$ as the set $\{(x,y) : 1 \le x\le n, \, 1\le y \le n\}$, and we imagine that the vertices are laid out in an $n \times n$ rectangular grid, with $(x,y)$ denoting the $x$th vertex from the left, in the $y$th row from the top.  Thus motivated, we define the {\it $i$th row} of $P_n \cart P_n$ to be the path consisting of all vertices of the form $(x,i)$ for $1 \le x \le n$; similarly, the {\it $j$th column} is the path consisting of vertices $(j,y)$ for $1 \le y \le n$.  The {\it left half} of $P_n \cart P_n$ consists of the first $\ceil{n/2}$ columns, while the {\it right half} consists of the last $\floor{n/2}$.  We say that vertex $(x,y)$ is {\it above} (resp. below, left of, right of) $(x',y')$ if $y < y'$ (resp. $y > y'$, $x < x'$, $x > x'$).

Mehrabian \cite{Meh12} observed that $\cinf(P_n \cart P_n) \le n$, since $n$ cops can begin by occupying every vertex in the top row, then move to occupy every vertex in the second row, then every vertex in the third row, and so on; the robber is forced to remain below the cops and must eventually be captured.
\begin{theorem}[\cite{Meh12}, Theorem 5.1(b)]\label{thm:2D_upper_Mehrabian}
For all positive integers $n$, we have $\cinf(P_n \cart P_n) \le n$.\looseness=-1
\end{theorem}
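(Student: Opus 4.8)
The plan is to formalize the \emph{downward sweep} strategy sketched in the paragraph preceding the statement. We deploy $n$ cops and, on their opening turn, place one cop on every vertex of the top row, so that $(x,1)$ is occupied for each $x$ with $1 \le x \le n$. On every subsequent cop turn, each cop moves straight down one row: the cop on $(x,i)$ moves to $(x,i+1)$. Consequently, after the cops' move in round $k$ the cops occupy precisely the vertices of row $\min(k+1,n)$, and once they reach row $n$ they stay put. The whole argument then reduces to tracking the robber relative to this moving ``wall.''

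The crux is the following invariant, which I would establish by induction on the number of completed rounds: \emph{at every moment of the game at which the robber is not yet captured, if the cops occupy row $i$ then the robber occupies some row $j$ with $j > i$.} The base case holds because the robber chooses his initial vertex in response to the cops sitting on row $1$, and hence must pick a vertex in a row $j \ge 2$. For the inductive step, suppose the cops occupy row $i$ and the robber is in row $j > i$. When the cops advance to row $i+1$: if $j = i+1$, the cop in the robber's column steps onto the robber and captures him; otherwise $j \ge i+2$, and after the cops' move the robber is still in row $j \ge i+2 > i+1$. Now it is the robber's turn. Since the cops occupy \emph{every} vertex of row $i+1$, any path from the robber's vertex to a vertex in a row $\le i+1$ must pass through an occupied vertex, and the robber—even with infinite speed—may traverse only cop-free paths; hence he cannot reach any row $\le i+1$, and the invariant is restored for the next round.

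To finish, observe that the cops reach row $n$ after $n-1$ rounds. If the robber were still uncaptured at that point, the invariant would force him into a row strictly below row $n$, which is impossible; hence he must have been captured on an earlier round, namely the one in which the cops first advance into his row. Thus $n$ cops suffice and $\cinf(P_n \cart P_n) \le n$. I do not expect a genuine obstacle here: the only points needing care are the precise handling of the robber's initial placement (so that the base case of the invariant is correct) and the off-by-one bookkeeping for the exact round on which the descending cop and the robber coincide. The case $n = 1$ is degenerate, since the single cop starts on the unique vertex and the robber has nowhere to go.
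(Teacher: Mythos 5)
Your proof is correct and is exactly the downward-sweep strategy the paper (following Mehrabian) uses: $n$ cops fill the top row and advance one row per turn, the fully occupied row forming a cut that the infinite-speed robber cannot cross, so he is trapped below and captured when the cops enter his row. The only additions are the explicit invariant and the round-counting, which are fine; no issues.
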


When $n$ is odd, this upper bound can be slightly improved.

\begin{theorem}\label{thm:2D_upper_odd}
If $n$ is odd and at least 3, then $\cinf(P_n \cart P_n) \le n-1$.
\end{theorem}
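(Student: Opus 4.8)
The plan is to improve the naive $n$-cop strategy by exploiting the parity of $n$: when $n$ is odd, the "sweeping line" of cops can skip a column, freeing up one cop.

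The plan is to run Mehrabian's sweep with a \emph{tilted} wall, so that one fewer cop suffices. For $2 \le c \le n$ the set $D_c = \{(x,y) : x+y = c\}$ consists of exactly $c-1 \le n-1$ vertices, and its removal separates $\{x+y < c\}$ from $\{x+y > c\}$, since any path must cross $x+y=c$. So I would have the $n-1$ cops sweep $c$ from $2$ up to $n$: a cop on $(x,y) \in D_c$ steps to $(x{+}1,y)$ or $(x,y{+}1) \in D_{c+1}$, with the two extra cops appearing at the ends of the anti-diagonal as it lengthens (formally one may start all $n-1$ cops stacked on $(1,1)$). The robber can never cross a fully occupied $D_c$, so after this phase he is confined to the corner triangle $T = \{x+y \ge n+1\}$, with the cops on $D_n$.

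The endgame, where oddness is used, is a shell-peeling argument on a \emph{square}. Put $m = \tfrac{n-1}{2}$, and for $0 \le j \le m-1$ let $Q_j = \{(x,y) : x,y \ge m+2+j\}$, an $(m-j)\times(m-j)$ corner subgrid, and let $L_j = \{(m{+}1{+}j,\,y) : y \ge m{+}2{+}j\} \cup \{(x,\,m{+}1{+}j) : x \ge m{+}2{+}j\}$, which has $2(m-j) = n-1-2j$ vertices and whose removal isolates $Q_j$. Given the configuration ``cops on $L_j$, robber in $Q_j$,'' moving each cop on $L_j$ one step inward (the vertical arm rightward, the horizontal arm downward) produces a set containing $L_{j+1}$ together with two cops to spare, and --- as one checks directly --- the robber cannot slip out during this move and is left confined to $Q_{j+1}$. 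Iterating from $j=0$ to $j=m-1$, the robber ends up confined to the single vertex $(n,n)$ with the last two cops on $L_{m-1} = \{(n{-}1,n),(n,n{-}1)\}$, and is captured on the next turn.

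The gap I expect to be the real obstacle is the hand-off from ``cops on $D_n$, robber in $T$'' to ``cops on $L_0$, robber in $Q_0$'' (note $Q_0 \subseteq T$). One cannot simply advance $D_n$ to $D_{n+1}$: the latter has $n$ vertices, so a gap must be left, and from any such gap the robber escapes $T$. Likewise the obvious interpolating regions --- the pentagons $T \cap \{x,y \ge a\}$, or $T$ thickened along two walls --- turn out to have separating cuts of size exactly $n$, one too large, essentially because the cut must ``rotate'' from spanning the left and top walls (as $D_n$ does) to spanning the bottom and right walls (as $L_0$ does), and a cut spanning two opposite sides of the grid needs $n$ vertices by Menger. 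Resolving this should require exhibiting an explicit family of bent cuts of size $\le n-1$ effecting the rotation around a corner of the board; here the parity of $n$ must be exactly what lets the two ``arms'' of these cuts be balanced so the total never reaches $n$. Verifying such a family --- and that consecutive cuts differ by a legal (distance-$\le 1$) cop move while keeping the robber trapped --- is the crux.
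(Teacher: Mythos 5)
Your sweep phase and your shell-peeling endgame are both fine, but the proof is incomplete exactly where you say it is, and that hand-off is not a technical detail --- it is the entire content of the theorem. After your anti-diagonal sweep the robber is confined to the triangle $T=\{x+y\ge n+1\}$, whose grid boundary consists of the whole right edge and the whole bottom edge; the cop wall $D_n$ has its endpoints on the top and left edges. To shrink $T$ you must deform this cut into one whose endpoints lie on the bottom and right edges (like your $L_0$), and any intermediate separating set whose endpoints lie on two opposite edges of the grid must meet all $n$ disjoint rows (or columns), hence has size at least $n$. So there is no obvious ``rotating'' family of cuts of size at most $n-1$, the parity of $n$ does not visibly help with this obstruction, and you give no candidate family. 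In short: the step you flag as ``the crux'' is missing, and it is not clear the plan can be completed along these lines at all. Having trapped the robber in the large three-corner triangle, you have arguably made the problem harder rather than easier.

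The paper's proof sidesteps the rotation problem by never creating that configuration. The $n-1$ cops start doubled up on the main-diagonal vertices $(2k,2k)$, $1\le k\le (n-1)/2$ (this is where oddness enters: $n-1$ cops exactly fill $(n-1)/2$ doubled positions straddling the diagonal). After seeing the robber's position, they deploy in a single move to a staircase parallel to the main diagonal on the robber's side, e.g.\ $(1,2),(2,3),\dots,(n-1,n)$ when the robber is below the diagonal. This cut has size $n-1$, its endpoints lie on two \emph{adjacent} edges, and it traps the robber in a triangle incident to only one corner; that triangle is then swept into its corner by repeatedly moving the cops in columns $1,\dots,n-2$, then $1,\dots,n-3$, etc., so the cut never needs more than $n-1$ vertices. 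A robber who starts exactly on the diagonal (necessarily at an odd-odd vertex) is surrounded and captured immediately by the four nearby cops. If you want to salvage your write-up, replacing your anti-diagonal sweep by a diagonal-parallel staircase chosen after the robber reveals his side is the key idea you are missing.
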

\begin{proof}
We give a strategy for $n-1$ cops to capture a robber on $G$.  At the beginning of the game, for each $k$ in $\{1, \dots, (n-1)/2\}$, two cops begin on vertex $(2k,2k)$, as depicted in Figure \ref{fig:2D_grid_upper}(a).  Suppose the robber begins on vertex $(x,y)$.  We consider three cases.
\begin{itemize}
\item {\it Case 1: $x<y$}.  For each $k$ in $\{1, \dots, (n-1)/2\}$, the two cops on vertex $(2k,2k)$ move to vertices $(2k-1,2k)$ and $(2k,2k+1)$.  Cops now occupy vertices $(1,2), (2,3), \dots, (n-1,n)$.  Let $S$ denote the set of cop positions, and note that $S$ is a cut-set of $G$: the component of $G\setminus S$ above the cops contains all vertices $(i,j)$ with $i \ge j$, while the component below contains all vertices $(i,j)$ with $i < j-1$.  Since $x<y$, either some cop has moved onto the robber's vertex (and the cops have won), or the robber is in the component below the cops.  On the robber's turn, he cannot pass through any vertex occupied by a cop, so he must remain in the component below the cops.  On the following cop turn, if the robber has not yet been captured, then the cops in columns $1, 2, \dots, n-2$ each move down one step.  On the cops' next turn -- supposing again that the robber has not yet been captured -- the cops in columns $1, 2, \dots, n-3$ each move down one step.  The cops continue in this manner until the robber is captured.  (Refer to Figure \ref{fig:2D_grid_upper}(b).)
\medskip
\item {\it Case 2: $x>y$}.  For this case, the cops can employ a strategy symmetric to that used in Case 1.
\medskip
\item {\it Case 3: $x=y$}.  If $x$ and $y$ are even, then the robber occupies the same vertex as some cop and the game is already over, so suppose instead that $x=y=2k-1$ for some $k$ in $\{1, \dots, (n+1)/2\}$.  If $k=1$, then the two cops on vertex $(2,2)$ move to vertices $(1,2)$ and $(2,1)$.  The robber is forced to remain on vertex $(1,1)$, so the cops can capture him on their ensuing turn.  Likewise, if $k=(n+1)/2$, then the cops on $(n-1,n-1)$ move to $(n-1,n)$ and $(n,n-1)$ and win on their next turn.  Finally, suppose $2 \le k \le (n-1)/2$.  The two cops on $(2k-2,2k-2)$ move to $(2k-1,2k-2)$ and $(2k-2,2k-1)$, while the two cops on $(2k,2k)$ move to $(2k,2k-1)$ and $(2k-1,2k)$.  The cops now occupy all four neighbors of the the robber's vertex, so once again he must remain on that vertex and the cops can capture him on their next turn. (Refer to Figure \ref{fig:2D_grid_upper}(c).)
\end{itemize}
\end{proof}

\begin{center}
\begin{figure}[ht]
\includegraphics{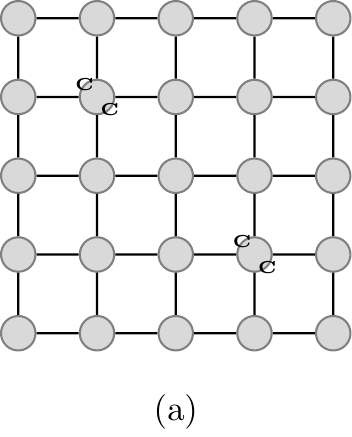}
\quad\quad\quad\quad\quad
\includegraphics{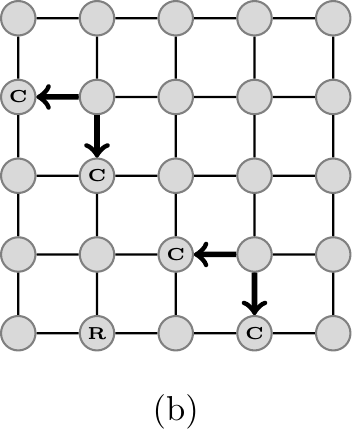}
\quad\quad\quad\quad\quad
\includegraphics{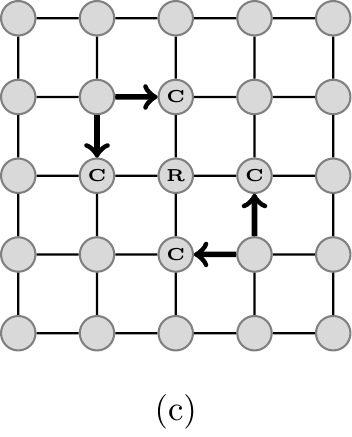}
\caption{Cop strategy in Theorem \ref{thm:2D_upper_odd} (for $n=5$).  Figure (a) depicts the cops' initial position; (b) depicts the cops' cop response to a robber below the main diagonal; (c) depicts the cops' response to a robber on the main diagonal.}
\label{fig:2D_grid_upper}
\end{figure}
\end{center}

Having established an upper bound on $\cinf(P_n \cart P_n)$, we seek to prove a matching lower bound.  As it turns out, this is substantially more difficult: proving a lower bound on $\cinf$ requires giving a strategy for the robber to evade a large number of cops, and the freedom of movement enjoyed by an infinite-speed robber enables fairly intricate strategies.

In general, the robber will want to seek out portions of the graph that contain no cops.  Given a graph $G$, we call a subgraph $H$ of $G$ {\it empty} at some point during the game if, at that time, no cops occupy vertices of $H$.  We say that the robber \textit{can reach} or \textit{has access to} a vertex $v$ if there is a cop-free path joining the robber's current position to $v$; the robber can reach a subgraph $H$ of $G$ if he can reach any vertex of $H$. 

\begin{theorem}\label{thm:2D_lower}
For all positive integers $n$, we have $\cinf(P_n \cart P_n) \ge n-1$.
\end{theorem}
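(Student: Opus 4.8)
The plan is to exhibit a strategy by which the robber evades $n-2$ cops forever; this shows $n-2$ cops do not suffice, and hence $\cinf(P_n \cart P_n) \ge n-1$. Throughout, let $G = P_n \cart P_n$, and call a vertex $v$ \emph{secure} (with respect to the current cop positions) if no cop lies in the closed neighborhood $N[v]$. The point of this notion is that if the robber ends a round at a secure vertex, then on the following cop turn every cop is too far away to reach him, so he survives. The robber will maintain the following invariant at the end of every round: \emph{the robber occupies a secure vertex $v$, and the component of $v$ in the cop-free subgraph of $G$ contains an entire cop-free row or an entire cop-free column.}

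Two easy observations make the invariant useful. First, since there are at most $n-2$ cops, at least two rows and at least two columns contain no cop; moreover, if $R$ is a cop-free row, then at most $n-2$ columns $c$ have a cop at $(c,R-1)$ or $(c,R+1)$, so at least two columns $c$ have $(c,R)$ secure. Hence any component of the cop-free subgraph that contains a full cop-free line also contains a secure vertex lying on that line. (Equivalently: making \emph{every} vertex of a cop-free row non-secure would require a cop directly above or below each of its $n$ vertices, hence $n$ cops.) Second, the robber can establish the invariant on his initial move by placing himself at the intersection of a cop-free row and a cop-free column: that vertex is secure, and its component contains the whole cop-free row.

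The crux is the maintenance step. Suppose the invariant holds at the end of some round, with the robber at a secure vertex $v$ lying on a cop-free row $R$ (the column case being symmetric). The cops now move, reaching positions $S'$ with $|S'| \le n-2$; by security, the robber is not captured. By the first observation, it suffices to show that the robber's new component $K'$ in $G - S'$ contains some full cop-free row or column, for then he can walk to a secure vertex on that line and restore the invariant. Suppose not. Then every cop-free row and column is disjoint from $K'$, and there are still at least two cop-free rows and two cop-free columns; note also that $K'$ cannot even contain a vertex adjacent to a cop-free row (else it would contain that whole row), so $K'$ avoids a thick band around each cop-free line. On the other hand, each cop moved at most one step, so the cops now sitting in row $R$ all came from rows $R \pm 1$; in particular $v$ still lies on a long cop-free interval $I$ of row $R$, and the cops flanking $I$ have vacated the cells immediately above or below their new positions. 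Tracing how the robber can flow out of $I$ into the neighboring rows and around the ends of $I$, one shows he reaches a cop-free row or column, contradicting the choice of $S'$. I expect this last analysis to be the main obstacle: it amounts to showing that $n-2$ cops, even in a near-optimal ``staircase'' placement, cannot wall the robber off from every full cop-free line, and it will likely require a case split on which side of $v$ the cops enter row $R$ and on the positions of the cop-free lines relative to $v$.

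Finally, since the invariant is maintained forever and it forces the robber to end every round at a secure vertex, the robber is never captured. Hence $\cinf(P_n \cart P_n) \ge n-1$.
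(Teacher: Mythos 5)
Your overall architecture (an invariant maintained round by round, with ``secure'' playing the role of the paper's property (2) and ``component contains a full cop-free line'' playing the role of access to an empty row) matches the paper's framework, but the inductive maintenance step --- which is the entire content of the theorem --- is both left unproven and, as you have stated the invariant, actually false. You write that one ``traces how the robber can flow out of $I$ into the neighboring rows and around the ends of $I$'' and concede this is ``the main obstacle''; that obstacle is where all the work lies, and your invariant is too weak to carry it. Concretely, take $n$ large and place cops on the anti-diagonal staircase $(1,10),(2,9),(3,8),(4,7),(6,5),(7,4),(8,3),(9,2),(10,1)$, i.e.\ all of $\{(i,11-i): 1\le i\le 10\}$ except the gap $(5,6)$, plus one cop at $(5,7)$; the remaining cops (far fewer than $n-2$ are used so far) sit far away. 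Put the robber at $(1,1)$. He is secure, and through the gap $(5,6)$ his cop-free component reaches the entirely cop-free row $11$, so your invariant holds. Now the cop at $(5,7)$ steps to $(5,6)$: the robber is sealed inside $\{(x,y): x+y\le 10\}$, a region containing no full row or column, so the invariant cannot be re-established, and the surplus cops can then sweep the corner and capture him. Thus ``secure vertex whose component contains a cop-free line'' is not an inductively self-sustaining property.

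The paper's proof is precisely about repairing this: the robber does not move to an arbitrary secure vertex on a cop-free line, but to a vertex $v_i$ chosen so that the cops \emph{locally} lack the numbers to wall him off in one move. In Case 1 the robber finds $k$ consecutive boundary rows with at most $k-2$ cops, restricts to a half (``sector'') with at most $\lfloor (k-2)/2\rfloor$ cops, locates four consecutive rows there containing at most one cop, and sits in the middle two of them away from the sector's boundary column; a counting argument then shows an empty row of $G$ remains reachable after any cop move. Case 2 handles the residual ``one cop per row and per column'' configurations by a separate argument tracking which single cop can have entered the top two rows. Your proposal would need an analogous strengthening of the invariant (some quantitative local-sparsity condition on the robber's waypoint) before the induction can close; as written, the key step is a conjecture rather than a proof.
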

\begin{proof}
The claim is clear by inspection when $n \le 3$, so suppose $n\ge 4$.  Let $G = P_n \cart P_n$; we give a strategy for the robber to evade $n-2$ cops on $G$.  Let $v_0$ denote the robber's initial position, and for $i \ge 1$, let $v_i$ denote the robber's position after his $i$th move.  The robber will choose each $v_i$ so that the following properties are satisfied:
\begin{enumerate}
\item[(1)] If the robber can reach an empty row of $G$ on his $i$th turn, then he can reach $v_i$.
\item[(2)] No cop is adjacent to $v_i$.
\item[(3)] After the cops' $(i+1)$st turn, the robber can reach an empty row of $G$ from $v_i$.
\end{enumerate}
If the robber can choose the $v_i$ in this way, then he can perpetually evade the cops: property (2) implies that the robber cannot be captured on the cops' $(i+1)$st turn, and properties (1) and (3) together imply that the robber can always reach $v_{i+1}$ from $v_i$, i.e. that he can legally move to $v_{i+1}$ with his $(i+1)$st turn.

Fix $i \ge 0$, and consider the state of the game after the cops' $i$th turn.  We explain how the robber chooses $v_i$.  We consider two cases. Case 1 covers the situation where there is a group of consecutive rows containing relatively few cops; in this case, the robber will be able to choose $v_i$ such that there are too few cops ``nearby'' to prevent him from reaching an empty row of $G$ after the cops' turn.  Case 2 deals with the situation where no cop occupies any vertices in the top row, bottom row, leftmost column, or rightmost column; here the robber will be able to find a suitable $v_i$ on the outer boundary of the grid.  Together, these cases cover all possible cop configurations.

{\bf Case 1:} For some $k$ in $\{2, 3, \dots, n-2\}$, either the top $k$ rows of $G$ contain no more than $k-2$ cops between them, or the bottom $k$ rows contain no more than $k-2$ cops, or both.  % Note: for certain parities of n, we can probably get away with k=n-1.  This means we'd be fine unless the first and last rows were both empty.  Not a huge help...
Without loss of generality, suppose the top $k$ rows contain at most $k-2$ cops.  Within these rows, either the left half or the right half (or both) must contain at most $\floor{\frac{k-2}{2}}$ cops.  Suppose this is true of the right half; a similar argument suffices if it is true for the left half.  We refer to the top $k$ rows of the right half of $G$ as the robber's {\it sector} of $G$ and denote this subgraph by $S$.

If the top two rows of $S$ contain no cops, then the robber chooses $v_i$ to be any vertex in the top row of $S$ that doesn't belong to the leftmost column of $S$.  This choice of $v_i$ clearly satisfies property (2).  It satisfies property (1) as well: if the robber can reach an empty row of $G$, then he can use this empty row to reach an empty column (which must exist since there are $n$ columns but only $n-2$ cops), and he can use this empty column to reach the top row of $G$ and subsequently $v_i$.  Finally, to see that it satisfies property (3), note that since the top two rows of $G$ are empty before the cops' $(i+1)$st turn, the top row remains empty after the cops' $(i+1)$st turn.

Suppose instead that the top two rows of $S$ contain at least one cop.  $S$ contains $k$ rows and at most $\floor{\frac{k-2}{2}}$ cops. Note that $S$ must have at least 4 rows: it contains no more than $\floor{\frac{k-2}{2}}$ cops, but the top two rows contain at least one cop, hence $\floor{\frac{k-2}{2}} \ge 1$ and so $k \ge 4$.  We claim that there must be some four consecutive rows within $S$ that contain at most one cop between them.  To prove this, let $k = 4d+r$ for some positive integer $d$ and some $r \in \{0, \dots, 3\}$; we consider two cases.
\begin{itemize}
\item Suppose $r \in \{0, 1\}$.  In this case, $\floor{\frac{k-2}{2}} = 2d-1$.  Partition the top $4d$ rows of $S$ into $d$ groups of four rows apiece; since there are at most $2d-1$ cops, at least one of these groups of four rows must have fewer than two cops.
\medskip
\item Suppose $r \in \{2, 3\}$.  By assumption, the top two rows of $S$ contain at least one cop.  Hence, the remaining $k-2$ rows contain at least $\floor{\frac{k-2}{2}}-1$ cops, which simplifies to $2d-1$.  As in the previous case, we may partition the next $4d$ rows of $S$ into groups of four, one of which must have fewer than two cops.
\end{itemize}
\medskip
The robber now chooses $v_i$ to be any vertex within the second and third of these four rows that is not currently adjacent to a cop and does not belong to the leftmost column of $S$.  (To see that this is possible, first note that since $S$ has at least four rows, we must have $n \ge 6$, hence $S$ has at least three columns.  Even after discarding the leftmost column of $S$, at least two columns remain; the robber has at least four potential destinations to choose from, and a single cop cannot be adjacent to all of them.) % We may need to justify this -- since $S$ has at least four rows we know $n \ge 6$, hence $S$ has at least 3 columns, from which it follows that there's a vertex in $S$ not adjacent to a cop and not in the first column.

The robber's choice of $v_i$ clearly satisfies property (2).  To see that it satisfies property (1), suppose the robber can reach an empty row of $G$.  Since $G$ has $n$ columns and at most $n-2$ cops, it must have an empty column, which the robber can reach using the empty row.  There must also be an empty row among the top $k$ rows of $G$; the robber can reach such a row using the empty column of $G$.  Next, the robber can use this empty row to reach an empty column of $S$ (which must exist since $S$ contains at least $\floor{n/2}$ columns and at most $\floor{\frac{k-2}{2}}$ cops).  Finally, since $v_i$ is either in or adjacent to an empty row of $S$, the robber can use the empty columns of $S$ to reach this row and subsequently $v_i$.

It remains to be shown that the robber's choice of $v_i$ satisfies property (3).  Let $S'$ denote $S$ with the bottom row and leftmost column removed.  Consider the state of the game after the cops' $(i+1)$st turn.  Although some cops may have entered the bottom row or leftmost column of $S$ on their turn, no cops outside $S$ can have entered $S'$.  Consequently, $S'$ has at most $\floor{\frac{k-2}{2}}$ cops.  Before the cops' move, $v_i$ belonged to either the second or third of four consecutive rows in $S'$ with at most one cop between them; hence, after the cops' move, the second and third rows still have at most one cop between them.  Consequently, either the row of $S'$ that contains $v_i$ is empty, or the row directly above or below $v_i$ is empty.  Either way, the robber can clearly reach an empty row of $S'$.  Note that $S'$ contains %$k-1$ rows and 
at least $\floor{n/2}-1$ columns and, as noted above, has at most $\floor{\frac{k-2}{2}}$ cops.  Since
$$\floor{\frac{k-2}{2}} = \floor{\frac{k}{2}} - 1 \le \floor{\frac{n-2}{2}} - 1 = \floor{\frac{n}{2}} - 2 < \floor{\frac{n}{2}} - 1,$$
$S'$ has at least one empty column.  Since the robber can reach an empty row of $S'$, he can thus also reach this empty column of $S'$.  Now note that the top $k$ rows of $G$ contained at most $k-2$ cops before the cops' move; some cops may have entered the $k$th row of $G$, but the top $k-1$ rows must still have at most $k-2$ cops.  Consequently, one of the top $k-1$ rows of $G$ is empty.  Since the robber can reach an empty column of $S'$, he can use it to reach this empty row of $G$.

Thus, the robber's choice of $v_i$ satisfies properties (1), (2), and (3), as claimed.

\medskip
{\bf Case 2:} There is no $k$ in $\{2, 3, \dots, n-2\}$ such that either the top $k$ or the bottom $k$ rows of $G$ contain at most $k-2$ cops.  In this case, the top two rows of $G$ cannot both be empty, since then we could take $k=2$ and use the top two rows of $G$.  The top two rows of $G$ cannot contain two or more cops between them, since then we could take $k=n-2$ and use the bottom $n-2$ rows of $G$.  Thus, the top two rows of $G$ contain exactly one cop between them.  Similarly, the bottom two rows of $G$ must also contain exactly one cop between them.  Every other row of $G$ must contain exactly one cop: if row $j$ were to contain more than one cop, for some $j$ in $\{3, \dots, n-2\}$, then either the top $j-1$ rows would contain at most $j-3$ cops, or the bottom $n-j$ rows would contain at most $n-j-2$ cops.  By symmetry, we may also assume that there is no $k$ in $\{2, 3, \dots, n-2\}$ such that either the leftmost $k$ or the rightmost $k$ columns of $G$ contain at most $k-2$ cops; otherwise, the robber could choose $v_i$ as in Case 1, except with the roles of rows and columns switched.  (Note that properties (1)-(3) are symmetric with respect to rows and columns, because the robber can reach an empty row if and only if he can reach an empty column.)  Hence, we may also conclude that the leftmost two columns contain exactly one cop, the rightmost two columns contain exactly one cop, and every other column contains exactly one cop.

The robber chooses $v_i$ to be any vertex that belongs to whichever of the top two rows is empty, is not adjacent to a cop, and is not in the leftmost or rightmost column of $G$.  It is clear that this choice of $v_i$ satisfies property (2). It also satisfies property (1): if the robber can reach an empty row of $G$, then he can reach an empty column, use this column to reach whichever of the top two rows is empty, and proceed from there to $v_i$.  

Finally, consider property (3).  After the cops' move, if one of the top two rows is empty, then the robber can clearly reach an empty row and property (3) is satisfied.  Otherwise, it must be that the cop in the third row moved up into the second row.  Say this cop occupies the $k$th column.  We claim that the robber can either reach all columns to the left of the $k$th column or all columns to the right.  Since neither of the top two rows is empty, each must contain exactly one cop.  If the robber is strictly to the left of the $k$th column, then he can move into the second row (if he isn't already there) and use it to reach all columns to the left of the $k$th column.  Similarly, if he is to the right of the $k$th column, then he can reach all columns to the right.  Finally, if the robber is in the $k$th column, then he must be in the top row; if the cop in the top row is to the left of the robber then the robber can reach all columns to the right of the $k$th column, and otherwise he can reach all columns to the left.

Without loss of generality, suppose the robber can reach all columns to the left of the $k$th column.  Before the cops' move, one of the leftmost two columns was empty; in particular, there was an empty column to the left of the $k$th column.  We claim that there is still an empty column to the left of the $k$th column; it would then follow that the robber can reach an empty column, from which he could reach an empty row, so property (3) would be satisfied.  

If the leftmost column is empty then we are done, so suppose otherwise.  Choose the least $\ell$ greater than 1 such that the cop in the $\ell$th column did not move to the left; note that $\ell \le k$, since the cop in the $k$th column moved up, not left.  In order for neither of the top two columns to be empty, it must be that the cop in the third column moved left into the second column.  Now, in order for the 3rd column not to be empty, the cop in the fourth column must have moved left into the third column, and so on.  It follows that the $(\ell-1)$th column must be empty, since the cop in the $(\ell-1)$th column moved left, but the cop in the $\ell$th column did not.  Since $\ell-1 < k$, we have shown that there is an empty column to the left of the $k$th column, as claimed.  It follows that the robber's choice of $v_i$ satisfies properties (1), (2), and (3).
\end{proof}

The following theorem summarizes our bounds on $\cinf(P_n \cart P_n)$.

\begin{theorem}\label{thm:2D_grids}
If $n \le 2$, then $\cinf(P_n \cart P_n) = n$; if $n$ is odd and at least 3, then $\cinf(P_n \cart P_n) = n-1$; if $n$ is even and at least 4, then $\cinf(P_n \cart P_n) \in \{n-1, n\}$.
\end{theorem}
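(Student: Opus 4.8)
The plan is to assemble the theorem from the three bounds already in hand, namely Theorem~\ref{thm:2D_upper_Mehrabian} (giving $\cinf(P_n \cart P_n) \le n$ for every $n$), Theorem~\ref{thm:2D_upper_odd} (giving $\cinf(P_n \cart P_n) \le n-1$ when $n$ is odd and at least $3$), and Theorem~\ref{thm:2D_lower} (giving $\cinf(P_n \cart P_n) \ge n-1$ for every $n$), supplemented by a direct check of the two smallest cases.

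First I would dispatch the case in which $n$ is odd and at least $3$: Theorem~\ref{thm:2D_upper_odd} gives $\cinf(P_n \cart P_n) \le n-1$ and Theorem~\ref{thm:2D_lower} gives $\cinf(P_n \cart P_n) \ge n-1$, so the two inequalities pinch the quantity to $n-1$. Next, for $n$ even and at least $4$, Theorem~\ref{thm:2D_upper_Mehrabian} gives $\cinf(P_n \cart P_n) \le n$ and Theorem~\ref{thm:2D_lower} gives $\cinf(P_n \cart P_n) \ge n-1$, so $\cinf(P_n \cart P_n) \in \{n-1, n\}$, as claimed.

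It then remains to settle $n \in \{1,2\}$. For $n=1$ the graph is a single vertex: one cop captures the robber immediately and zero cops cannot, so $\cinf(P_1 \cart P_1) = 1$. For $n=2$ the graph is the $4$-cycle $C_4$; two cops placed on antipodal vertices win at once, since the robber is forced onto a vertex adjacent to both cops and one of them steps onto him, so $\cinf(P_2 \cart P_2) \le 2$. On the other hand a single cop cannot win on $C_4$: the robber perpetually occupies the vertex antipodal to the cop, and whenever the cop moves to a neighbour, the edge joining the robber's current vertex to the new antipodal vertex is cop-free, so the robber relocates there and maintains the invariant. Hence $\cinf(P_2 \cart P_2) = 2$. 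In both of these cases $\cinf(P_n \cart P_n) = n$, which completes the proof.

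I do not anticipate any genuine obstacle here: the combinatorial substance lives entirely in the robber strategy of Theorem~\ref{thm:2D_lower} and the cop strategy of Theorem~\ref{thm:2D_upper_odd}, both already established. The only subtlety worth flagging is the (easy) verification that one cop does not suffice on $C_4$, which is precisely what rules out the a priori possibility that $\cinf(P_2 \cart P_2)$ might equal $n-1 = 1$ rather than $n = 2$.
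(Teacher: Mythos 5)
Your proposal is correct and follows exactly the paper's own argument: the odd case from Theorems~\ref{thm:2D_upper_odd} and \ref{thm:2D_lower}, the even case from Theorems~\ref{thm:2D_upper_Mehrabian} and \ref{thm:2D_lower}, and the cases $n\le 2$ by direct inspection (which the paper leaves implicit and you verify explicitly, including the easy check that one cop cannot win on $C_4$).
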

\begin{proof}
Let $G = P_n \cart P_n$.  If $n\le 2$, then by inspection $\cinf(G) = n$.  If $n$ is odd and at least 3, then Theorems \ref{thm:2D_upper_odd} and \ref{thm:2D_lower} together give $n-1 \le \cinf(G) \le n-1$; if $n$ is even and at least 4, then Theorem \ref{thm:2D_upper_Mehrabian} and \ref{thm:2D_lower} together give $n-1 \le \cinf(G) \le n$.
\end{proof}

Although Theorem \ref{thm:2D_grids} does not establish the exact value of $\cinf(P_n \cart P_n)$ when $n$ is even, we suspect that in fact $\cinf(P_n\cart P_n)=n$ for all even $n$.  However, we were unable to find a simple proof of this, even for $n \in \{4, 6, 8\}$. 

Although we have thus far focused exclusively on $n \times n$ grids, the game behaves quite similarly on general $m \times n$ grids.  To formalize this, we first establish that $\cinf$ is monotone with respect to retraction; the proof of this fact is quite similar to an argument used by Nowakowski and Winkler \cite{NW83} in their characterization of cop-win graphs.  Given a graph $G$ and subgraph $H$, a \textit{retraction} from $G$ to $H$ is a homomorphism $\phi: G \rightarrow H$ such that $\phi(v) = v$ for all $v \in V(H)$.  When there exists a retraction from $G$ to $H$, we say that $H$ is a \textit{retract} of $G$. 

\begin{theorem}\label{thm:monotone}
If $H$ is a retract of $G$, then $\cinf(G)\ge \cinf(H)$.
\end{theorem}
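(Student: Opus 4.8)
The plan is to show that any winning strategy for $\cinf(G)$ cops on $G$ can be ``pushed forward'' along the retraction $\phi: G \to H$ to a winning strategy for the same number of cops on $H$. Write $k = \cinf(G)$ and fix a strategy $\sigma$ for $k$ cops that captures any robber on $G$. To play against a robber on $H$, the $k$ real cops will maintain a \emph{shadow game} on $G$, in which $k$ imaginary cops follow $\sigma$ against an imaginary robber; at all times the real cops will occupy the $\phi$-images of the shadow cops' positions.

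First I would pin down the correspondence between the two games. Since $V(H) \subseteq V(G)$, every move the real robber makes on $H$ is also a legal move on $G$, and we require the shadow robber on $G$ to occupy the same vertex as the real robber on $H$ throughout. Initially, the shadow cops choose starting vertices on $G$ according to $\sigma$, the real cops start at the corresponding $\phi$-images, and once the real robber chooses his starting vertex on $H$ the shadow robber is placed there too. On each cop turn, the shadow cops move on $G$ according to $\sigma$ (well-defined, since the shadow positions determine the entire history of the shadow game), and each real cop moves to the $\phi$-image of the new position of the corresponding shadow cop. Such a move is legal on $H$: if a shadow cop moves along an edge $uu'$ of $G$ (or stays put), then, since $\phi$ is a homomorphism, $\phi(u)$ and $\phi(u')$ are equal or adjacent in $H$, so the real cop either stays put or moves to a neighbor.

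The crux is that the real robber's moves lift to the shadow game. Suppose that at some moment a vertex $v \in V(H)$ is unoccupied by the real cops; then no shadow cop occupies $v$ either, for if a shadow cop were at $v$ then, since $\phi$ fixes $V(H)$ pointwise and hence $\phi(v) = v$, the corresponding real cop would be at $v$, a contradiction. Consequently, any path in $H$ from the real robber's position avoiding all real cops is also a path in $G$ whose vertices avoid all shadow cops, so the shadow robber may legally traverse it with his (infinite-speed) move. Thus the shadow game is a genuine play of the infinite-speed game on $G$ in which the cops follow the winning strategy $\sigma$, so eventually some shadow cop lands on the shadow robber's vertex, say $v$. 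Since $v \in V(H)$ we have $\phi(v) = v$, so the corresponding real cop occupies $\phi(v) = v$, the real robber's vertex, and the real cops have won. Hence $\cinf(H) \le k = \cinf(G)$.

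I expect the only delicate point to be the lifting of the robber's moves in the third paragraph, where the property that $\phi$ restricts to the identity on $V(H)$ is exactly what is needed; the legality of the projected cop moves and the bookkeeping of initial positions are routine.
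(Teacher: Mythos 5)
Your argument is correct, but it runs in the opposite direction from the paper's: you push a winning \emph{cop} strategy forward from $G$ to $H$ (real cops on $H$ shadow an imagined play on $G$ and occupy the $\phi$-images of the shadow cops), whereas the paper pulls a winning \emph{robber} strategy from $H$ back to $G$ (the robber stays inside $H$, pretends each cop at $v$ is at $\phi(v)$, and plays his evasion strategy for $H$ against these images). The two simulations are mirror images and rest on the same two facts -- that $\phi$ is a homomorphism, so projected cop moves are legal, and that $\phi$ fixes $V(H)$ pointwise, so a capture (resp.\ evasion) in one game transfers to the other -- and you identify the genuinely delicate step correctly: in your version it is lifting the robber's infinite-speed move, i.e.\ that a path in $H$ avoiding the real cops avoids all shadow cops (a shadow cop on a vertex of $H$ forces a real cop on that same vertex), which is exactly dual to the point the paper needs, namely that the robber's prescribed path in $H$ avoiding the cops' images avoids the actual cops. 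Your phrasing ``$\phi(u)$ and $\phi(u')$ are equal or adjacent'' is in fact slightly more careful than the paper's, which asserts adjacency outright; allowing the real cop to stand still when the shadow move collapses under $\phi$ is the right reading. One small difference in what the two routes require: yours uses that a winning cop strategy on $G$ captures in finite time against every legal robber play (true by definition of the cop number), while the paper's works directly with perpetual evasion and so never needs to invoke termination; both are perfectly sound, and each proves the inequality in the form $\cinf(H)\le\cinf(G)$ versus the paper's $\cinf(G)\ge\cinf(H)$ via a robber who beats $\cinf(H)-1$ cops on $G$.
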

\begin{proof}
Let $k = \cinf(H)$; we will show that the robber has a winning strategy on $G$ against $k-1$ cops, from which it immediately follows that $\cinf(G) \ge k = \cinf(H)$.  

Let $\phi$ be a retraction from $G$ to $H$.  Throughout the game, the robber will remain in $H$ and will use a winning strategy on $H$ to evade the cops on $G$.  On each robber turn, if the cops occupy vertices $v_1, \dots, v_k$ in $G$, then the robber imagines that they in fact occupy vertices $\phi(v_1), \dots, \phi(v_k)$ in $H$, and he moves according to a winning strategy in $H$.  When a cop moves from $v$ to $v'$ in $G$, the robber imagines that they move from $\phi(v)$ to $\phi(v')$ in $H$.  Since $v$ must be adjacent to $v'$ in $G$ and since $\phi$ is a homomorphism, $\phi(v)$ must be adjacent to $\phi(v')$ in $H$; consequently, every legal cop move in $G$ corresponds to a legal move of the cops' images on $H$.  Since the robber follows a winning strategy on $G$, he indefinitely evades the cops' images; since each vertex of $H$ is its own image under $\phi$, it follows that the robber indefinitely evades the cops themselves, meaning that he wins the game on $G$.
\end{proof}

The cop strategy used to show that $\cinf(P_n \cart P_n) \le n$ extends naturally to $P_m \cart P_n$ and shows that $\cinf(P_m \cart P_n) \le \min\{m,n\}$.  Additionally, for any $k\geq2$ and $n_1,n_2,\dots,n_k\geq n\geq 3$, there is a natural retraction $\phi:P_{n_1}\cart P_{n_2}\cart \dots\cart P_{n_k}\to P_n\cart P_n\cart \dots P_n$ given by $\phi((x_1,x_2,\dots,x_k))=(\min\{x_1,n\},\min\{x_2,n\},\dots,\min\{x_k,n\})$.  Thus, Theorems \ref{thm:2D_grids} and \ref{thm:monotone} together show that $\cinf(P_m \cart P_n) \ge \min\{m,n\}-1$.   

\begin{corollary}\label{cor:general_grids}
For all positive integers $m$ and $n$, 
$$\min\{m,n\}-1 \le \cinf(P_m \cart P_n) \le \min\{m,n\}.$$
\end{corollary}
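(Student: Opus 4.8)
The plan is to prove the two inequalities separately, reducing each to results already in hand. For the upper bound I would generalize the sweeping strategy behind Theorem~\ref{thm:2D_upper_Mehrabian}; for the lower bound I would exhibit an explicit retraction of $P_m \cart P_n$ onto a square subgrid and then quote Theorems~\ref{thm:2D_grids} and \ref{thm:monotone}.

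\textbf{Upper bound.} Assume without loss of generality that $m \le n$, so $\min\{m,n\} = m$, and picture $P_m \cart P_n$ in the plane with $n$ rows of $m$ vertices each. Place $m$ cops on the first row, and on each cop turn move every cop down by one row, so that after $t$ turns the cops occupy the $(t+1)$st row. At every moment the cops' positions form a cut-set separating the rows above them from those below; the robber starts below the cops (or is captured at once) and cannot pass through a cop, so he is permanently confined to the rows strictly below the cops and is caught by the time the cops reach row $n$. Hence $\cinf(P_m \cart P_n) \le m = \min\{m,n\}$.

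\textbf{Lower bound.} Again take $m \le n$ and set $m' = \min\{m,n\} = m$. Define $\phi : P_m \cart P_n \to P_{m'} \cart P_{m'}$ by $\phi(x,y) = (\min\{x,m'\}, \min\{y,m'\})$; its image is the $m' \times m'$ subgrid occupying one corner, and $\phi$ fixes every vertex of that subgrid. Moving along one edge of $P_m \cart P_n$ changes a single coordinate by $1$ and hence changes $\phi$ by at most $1$ in that coordinate, so $\phi$ sends adjacent vertices to adjacent-or-equal vertices, which is exactly what the retraction argument of Theorem~\ref{thm:monotone} requires (as usual one treats the graphs as reflexive, so collapsing an edge is permitted). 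Therefore $\cinf(P_m \cart P_n) \ge \cinf(P_{m'} \cart P_{m'})$, and Theorem~\ref{thm:2D_grids} gives $\cinf(P_{m'} \cart P_{m'}) \ge m' - 1$ in every case. Combining, $\cinf(P_m \cart P_n) \ge m' - 1 = \min\{m,n\} - 1$.

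Neither step is delicate once the square-grid theorems are available; the only genuine verifications are that the descending cops really do confine the robber on a non-square board (a routine cut-set argument, essentially Case~1 of Theorem~\ref{thm:2D_upper_odd}) and that the coordinate-wise-minimum map is a legitimate retraction. If anything here counts as an obstacle it is cosmetic: phrasing the statement so that it also covers $\min\{m,n\} \in \{1,2\}$, where $\min\{m,n\}-1$ equals $0$ or $1$ and the lower bound is vacuous or trivial. Indeed, the general-dimension retraction displayed just before the corollary already encapsulates precisely this reduction, so the corollary follows with essentially no further work.
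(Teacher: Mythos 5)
Your proposal is correct and follows essentially the same route as the paper: the upper bound comes from extending the row-sweeping cop strategy of Theorem~\ref{thm:2D_upper_Mehrabian} to the $m\times n$ board, and the lower bound comes from the coordinate-wise minimum retraction onto the $\min\{m,n\}\times\min\{m,n\}$ corner subgrid combined with Theorems~\ref{thm:2D_grids} and~\ref{thm:monotone}. Your explicit remarks about the reflexive convention for collapsing edges and the trivial cases $\min\{m,n\}\in\{1,2\}$ are consistent with (and slightly more careful than) the paper's brief treatment.
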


We conclude the section with a brief look at the discrete torus $C_n \cart C_n$.  We can establish lower and upper bounds on $\cinf(C_n \cart C_n)$ with arguments very similar to those used for $P_n \cart P_n$.  As with $P_n \cart P_n$, we view the vertex set of $C_n \cart C_n$ as the set $\{(x,y) : 1 \le x\le n, \, 1 \le y \le n\}$.  The {\it $i$th row} is the cycle consisting of all vertices of the form $(x,i)$ for $1 \le x \le n$, and the {\it $j$th column} is the cycle consisting of $(j,y)$ for $1 \le y \le n$.  % We say that vertex $(x,y)$ is {\it above} (resp. below, left of, right of) $(x',y')$ if $y < y'$ (resp. $y > y'$, $x < x'$, $x > x'$).

\begin{theorem}\label{thm:torus}
    For all $n\geq 18$, we have $2n-24 \le \cinf\left(C_n\cart C_n\right)\le 2n$.
\end{theorem}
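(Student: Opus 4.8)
The plan has two halves, and they are very different in difficulty.

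For the upper bound $\cinf(C_n\cart C_n)\le 2n$, I would run the row-sweeping strategy of Theorem~\ref{thm:2D_upper_Mehrabian}, but with an extra $n$ cops to compensate for the fact that a single row of $C_n\cart C_n$ is not a cut-set. Concretely: put $n$ cops on row~$1$ and leave them there for the whole game, and put $n$ more cops on row~$2$; after the robber chooses his position (necessarily in rows $3,\dots,n$), have the second group sweep downward, moving as a block from row~$2$ to row~$3$ to row~$4$, and so on. The thing to verify is that whenever the sweepers occupy row~$j$ with $3\le j\le n-1$, the occupied rows $\{1,j\}$ separate the torus into two cylinders and the robber is confined to the one spanning rows $j+1,\dots,n$; each sweep step therefore shrinks the robber's territory. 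When the sweepers finally step onto row~$n$ --- which \emph{is} adjacent to the anchor row~$1$, so $\{1,n\}$ is no longer a cut-set --- the robber has already been forced onto row~$n$ alone, and the sweepers cover all $n$ of its vertices, capturing him. Identifying the robber's component after deleting two rows of the torus is the only (routine) point to check.

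For the lower bound $\cinf(C_n\cart C_n)\ge 2n-24$, I would give a strategy for the robber to evade $2n-25$ cops, following the framework of Theorem~\ref{thm:2D_lower}: the robber picks successive positions $v_0,v_1,\dots$ maintaining invariants of the shape ``if the robber can reach the current escape structure then he can reach $v_i$'' (property~(1)), ``no cop is adjacent to $v_i$'' (property~(2)), and ``after the cops' next turn the robber can again reach the escape structure from $v_i$'' (property~(3)); as in Theorem~\ref{thm:2D_lower}, (2) together with (1) and (3) yields perpetual evasion. The escape structure and the case analysis must be reworked for the torus. When $2n-25<n$ (that is, $n\le 24$) there is automatically an empty row, and more generally the robber aims for an empty row or empty column whenever one exists; the main case then mirrors Case~1 of Theorem~\ref{thm:2D_lower}, with the robber locating a band of consecutive rows (or columns) carrying relatively few cops, passing to a ``sector'' consisting of half its columns, finding inside the sector a narrow strip that is nearly cop-free, and placing $v_i$ in that strip. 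The threshold $n\ge 18$ and the additive constant $24$ should fall out of the bookkeeping in this step --- ensuring the sector has enough columns, enough spare empty columns to absorb one cop move, and enough width for the strip.

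The main obstacle is the residual case, which is where the torus genuinely differs from the grid: because the torus has no boundary, there is no analog of Case~2 of Theorem~\ref{thm:2D_lower}, and the hard scenario is that the cops are distributed so uniformly --- roughly two per row and two per column --- that no band of rows or columns is cop-sparse and no row or column is empty. The point to exploit is that merely keeping at least one cop in every row and every column already accounts for about $n$ of the $2n-25$ cops, so at least roughly $25$ rows, and at least roughly $25$ columns, contain exactly one cop; the robber should route through this ``thin'' skeleton. Turning this observation into a clean choice of $v_i$ satisfying (1)--(3) --- i.e.\ showing that such a near-uniform configuration can never simultaneously seal off the robber's access to every thin row and thin column --- is the heart of the proof and the step I expect to be most delicate.
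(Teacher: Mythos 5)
Your upper bound is fine: anchoring $n$ cops on row $1$ and sweeping $n$ cops downward is a minor variant of the paper's strategy (there, both rows of cops sweep toward each other), and the verification you flag --- that the two occupied rows separate the torus into two cylinders and the robber's cylinder shrinks --- is exactly the routine point the paper relies on.

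The lower bound, however, has a genuine gap, and it is precisely the step you yourself defer. The paper's proof has no ``residual case'' at all, because it does not use empty rows and columns as the escape structure: against $2n-25$ cops an empty row or column need not exist (one cop per row on a diagonal already kills them with only $n$ cops), so a Case-1-style argument aiming at empty rows cannot be the main case either. The key idea you are missing is to relax the invariants to \emph{nearly empty} rows and columns (at most one cop), which works on the torus exactly because each row and column is a cycle, and a cycle minus one vertex is still connected --- a single cop cannot block a nearly empty row or column. With this relaxation the counting closes without any uniform-distribution scenario: partition the rows into six bands of $d$ or $d+1$ consecutive rows ($n=6d+r$); if every band of $k$ rows had at least $2k-4$ cops the total would be at least $2n-24$, so some band with $k$ rows has at most $2k-5$ cops, hence at least three nearly empty rows, and since $2k-5\le 2d-3<n/3$ the band also contains three consecutive columns that are empty inside the band. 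Placing $v_i$ at the intersection of the middle such column and the second nearly empty row gives property (2) (its four neighbours lie in the empty columns inside the band), property (1) (three nearly empty columns of $G$ always exist, and one of them avoids cops in both relevant nearly empty rows, so the robber can route row--column--row around single cops), and property (3) (cops can only enter the band's interior of that column through its top or bottom row, and the middle $k-2$ rows still contain a nearly empty row reachable along the cop-free column segment). So the configuration you identify as ``the heart of the proof'' never has to be analyzed separately; without the nearly-empty relaxation, neither your main case nor your residual case goes through as described.
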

\begin{proof}
    Let $n\geq 18$ and let $G=C_n\cart C_n$.  
    
     For the upper bound, cops begin by occupying every vertex on the top and bottom rows of $G$.  On each subsequent cop turn, those cops who began on the top row move down, and those cops who began on the bottom row move up.  The robber cannot pass through the two rows of cops and hence will eventually be captured.
     
     For the lower bound, we give a strategy for the robber to evade $2n-25$ cops on $G$.  Let $n=6d+r$, for some positive integer $d$ and some $r$ in $\{0, \dots, 5\}$.  
    
    We partition the rows of $G$ into six {\it bands}, each consisting of either $d$ or $d+1$ consecutive rows.
    We call a row or column of $G$ \textit{nearly empty} if it contains at most one cop. Let $v_0$ denote the robber's initial position, and for $i\geq 1$, let $v_i$ denote the robber's position after his $i$th move.  The robber will choose each $v_i$ such that the following properties hold:
    \begin{enumerate}
        \item If the robber can reach a nearly empty row of $G$, then he can reach $v_i$.
        \item No cop is adjacent to $v_i$.
        \item After the cops' $(i+1)$st move, the robber can reach a nearly empty row of $G$ from $v_i$.\looseness=-1
    \end{enumerate}
    As in the proof of Theorem \ref{thm:2D_lower}, property (2) ensures that the robber cannot lose on the cops' $(i+1)$st turn, while properties (1) and (3) together ensure that, on his $(i+1)$st turn, he can reach $v_{i+1}$ from $v_i$.  Thus if the robber can choose the $v_i$ to have these three properties, then he can evade the cops indefinitely.
    
    Fix $i\geq 0$ and consider the state of the game after the cops' $i$th turn.  We explain how the robber chooses $v_i$.  Since there are at most $2n-25$ cops, there must exist a band of $d$ rows with at most $2d-5$ cops or a band of $d+1$ rows with at most $2(d+1)-5$ cops.  In either case, this band has at least three nearly empty rows.  Since  
    $$2(d+1)-5=2d-3<\frac{6d+r}{3}=\frac{n}{3},$$ 
    the band must have three consecutive empty columns. The robber chooses $v_i$ to be the vertex at the intersection of the second of these three columns and the second nearly empty row in the band. Clearly, $v_i$ satisfies property (2); we will next show that it also satisfies properties (1) and (3).
    
    To show that $v_i$ satisfies property (1), suppose the robber can reach a nearly empty row of $G$.  The robber would like to move to $v_i$, which is located on a nearly empty row in some band of $G$.  Since there are fewer than $2n-4$ cops playing on $G$, there must be at least three nearly empty columns in $G$.  At most one of these three columns may contain a cop in the robber's current row, and at most one of these columns may contain a cop in the row containing $v_i$.  Thus, using the nearly empty row, the robber can access at least one nearly empty column with no cop in either the robber's current row nor in the row containing $v_i$; using that column, he can reach the nearly empty row that contains $v_i$ and, consequently, $v_i$ itself, so (1) is satisfied.
    
    Finally, to show that $v_i$ satisfies property (3), suppose the cops have made their $(i+1)$st move.  Before the cops' move, $v_i$ was in the second of three consecutive empty columns in some band.  Thus, after the cops' move, $v_i$ is still in a column of the band that is empty except perhaps for the top and bottom rows in the band (since some cops may have moved into these rows on the cops' move).  Let $k$ denote the number of rows in the band.  There were at most $2k-5$ cops in the band before the cops' move, so disregarding the top and bottom rows of the band, there are still at most $2k-5$ cops spread across $k-2$ rows.  At least one of these rows must be nearly empty; the robber can reach this row using his current column in the band, so property (3) is satisfied.
\end{proof}
\end{section}

\begin{section}{Higher-Dimensional Grids}\label{sec:3D_grids}

We now turn our attention to three-dimensional Cartesian grids.  We focus on the $n \times n \times n$ grid $P_n \cart P_n \cart P_n$, but Theorem \ref{thm:monotone} can be used to establish similar results on three-dimensional grids of varying dimensions. % \sout{We begin with the following observation; we omit the proof, since it is extremely similar to that of Theorem \ref{thm:monotone}.}}

%\begin{theorem}\label{thm:3D_monotone}
%For positive integers $m,m',n,n',p,$ and $p'$, if $m \le m'$, $n \le n'$, and $p \le p'$, then $\cinf(P_m \cart P_n \cart P_p) \le \cinf(P_{m'} \cart P_{n'} \cart P_{p'}).$
%\end{theorem}

% \textcolor{blue}{\sout{Henceforth, we focus on the $n \times n \times n$ grid $P_n \cart P_n \cart P_n$.}} 
Let $G=P_n\cart P_n\cart P_n$. We view the vertex set of $G$ as the set $\{(x,y,z) : 0\le x\le n-1, \, 0\le y\le n-1, \, 0\le z\le n-1\}$.  A \textit{row} of $G$ is a path $R_{y,z}$ consisting of those vertices whose second and third coordinates are $y$ and $z$, respectively.  Similarly, a \textit{column} of $G$ is a path $C_{x,z}$ consisting of all vertices whose first and third coordinates are $x$ and $z$, and a \textit{shaft} of $G$ is a path $S_{x,y}$ consisting of all vertices whose first and second coordinates are $x$ and $y$, respectively.  The \textit{$k$th plane} of $G$ is the subgraph induced by those vertices whose third coordinate is $k$.
 
 Our first main result in this section is an upper bound on $c_\infty(P_n\cart P_n\cart P_n)$.  In what follows, the following intuition will be helpful for the reader.  Often, an effective cop strategy works (very roughly) as follows.  First, the cops occupy a cut-set of the graph, thereby splitting the graph into two or more components.  They then identify the component in which the robber is located and gradually close in, making the robber's component smaller and smaller, until he is eventually captured.  To execute such a strategy with as few cops as possible, the cops will often want to find a ``small'' cut-set $S$ in the graph $G$ that minimizes the size of the largest component of $G\setminus S$.  In particular, the cops may wish to find a small cut-set $S$ that splits $G\setminus S$ into two components, each containing roughly half the vertices of $G$.

\begin{theorem}\label{thm:3D_grid_upper}
    For all positive integers $n$, we have $c_\infty(P_n\cart P_n\cart P_n)\leq (0.75+o(1))n^2$.
\end{theorem}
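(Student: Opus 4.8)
The plan is to exhibit a cop strategy using about $\tfrac34 n^2$ cops that works by first partitioning $G$ with a small cut-set and then shrinking the robber's region. The key geometric observation is that a good cut-set for $G = P_n\cart P_n\cart P_n$ is \emph{not} a single plane (which has $n^2$ vertices and would cost $n^2$ cops) but rather an ``L-shaped'' or ``staircase'' surface built from partial planes and partial columns/shafts. Concretely, I would have the cops occupy, for some threshold $t \approx n/2$, the set $S$ consisting of all vertices $(x,y,z)$ with $z = t$ and $x < t$ together with all vertices with $x = t$ and $z \geq t$ — i.e.\ half of one plane glued to half of one ``wall.'' Such a set has roughly $tn + (n-t)n$ vertices; optimizing the bend point and iterating the construction in more than one coordinate is what drives the constant down toward $3/4$. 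The cut-set $S$ separates $G$ into a piece containing the ``corner'' near $(0,\cdot,0)$-ish region and the complementary piece; the cops identify which side the robber is on and then advance the cut-set monotonically inward, always maintaining it as a cut-set so the infinite-speed robber can never slip past.

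The main structural step I would carry out is the recursion. Having reduced the robber to one side of the first cut, the cops are now effectively playing on a region that looks like $P_n \cart P_n \cart P_{n/2}$ (or a similarly ``thinned'' box), but crucially they can \emph{reuse} most of the cops already in place as the new cut-set — only a thin layer needs to move at each step. The cost accounting is: the peak number of cops ever needed is the max over all stages of the size of the current cut-set, and the claim is that by choosing bend points carefully (balancing the two sub-regions at each level, à la the introduction's remark about splitting $G\setminus S$ into halves), this peak is $(\tfrac34 + o(1))n^2$. I would make this precise by writing the cut-set at the generic stage as a union of a horizontal slab of area $\approx a n$ and a vertical slab of area $\approx b n$ with $a + b$ bounded by the relevant partial sums, and show $a+b \le (\tfrac34+o(1))n$ throughout. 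A clean way to see the $3/4$: a unit cube can be covered by three faces meeting at a vertex, but here we only need to separate, and a ``brick'' that is $\tfrac12 \times 1 \times 1$ plus $\tfrac12 \times 1 \times 1$ arranged as an L has surface-cut cost $\tfrac12 + \tfrac14 = \tfrac34$ of $n^2$ after optimizing the split.

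After establishing the cut-set and the recursion, the routine parts are: (i) verifying that $S$ and each of its successors really is a cut-set of the relevant region (an elementary check that every path from one side to the other passes through a vertex of $S$, since $S$ contains a full ``cross-section'' in the appropriate coordinate for each fiber); (ii) verifying that the transition from one cut-set to the next is legal, i.e.\ each cop moves to an adjacent vertex and the robber cannot cross during the transition — this needs the standard trick of advancing the cut-set one ``column'' at a time (as in the proof of Theorem~\ref{thm:2D_upper_odd}, where cops in successive columns step down in successive rounds), so that at every intermediate configuration the union of the old and new partial layers still separates; and (iii) the base case, where the robber has been confined to a region of bounded size and finitely many cops finish the job. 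I would also need to confirm the cops can correctly determine which side the robber occupies — immediate, since after $S$ is in place the two sides are in different components of $G\setminus S$ and the cops see the robber's position.

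The hard part will be step (ii) combined with the bookkeeping in the recursion: making sure that while a thin layer of cops is migrating to form the next cut-set, the \emph{total} number of cops in use (old layer being vacated plus new layer being built plus the stationary ``spine'' shared between consecutive cuts) does not exceed the $(\tfrac34+o(1))n^2$ budget, and that no gap ever opens through which the infinite-speed robber can escape. This is delicate precisely because an infinite-speed robber exploits any transient hole, so the cut-set must be maintained \emph{continuously} across each round; the $o(1)$ slack is exactly what absorbs the extra thin layer in transit. I expect the argument to be organized as an explicit phase-by-phase description of cop positions with an invariant of the form ``after round $r$, the cops occupy cut-set $S_r$, the robber is on the small side, and $|S_r| \le (\tfrac34+o(1))n^2$,'' proved by induction on $r$.
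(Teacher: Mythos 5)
There is a genuine gap, and it sits at the heart of the argument: your choice of cut-set does not have size $(\tfrac34+o(1))n^2$, and no axis-aligned ``L-shaped'' cut can. Your own count already shows the problem: the set $\{(x,y,z): z=t,\ x<t\}\cup\{(x,y,z): x=t,\ z\ge t\}$ has $tn+(n-t)n=n^2$ vertices regardless of where you put the bend, and it only fences off the box $\{x<t,\ z\ge t\}$, roughly a quarter of the cube. If you instead try to use an axis-aligned L to separate a region of volume $\approx n^3/2$, say $A=\{z<t\}\cup\{x<s,\ z\ge t\}$ with $t=\alpha n$, $s=\beta n$ and $\alpha+(1-\alpha)\beta=\tfrac12$, the cut has size $\approx(2-\alpha-\beta)n^2$, which is minimized at about $1.41n^2$ --- worse than a flat plane, let alone $\tfrac34 n^2$. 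The heuristic ``$\tfrac12+\tfrac14=\tfrac34$'' does not correspond to any set that actually separates the grid, and ``iterating the construction in more than one coordinate'' cannot repair this: by the Bollob\'as--Leader vertex-isoperimetric inequality (the same result invoked for the lower bound, Lemma~\ref{lem:isoperimetric}), the cheapest way to cut off half the vertices is the antidiagonal set $S_{m}=\{(x,y,z): x+y+z=m\}$ with $m=\tfrac{3(n-1)}{2}$, and a direct inclusion--exclusion count gives $|S_m|=\binom{m+2}{2}-3\binom{m-n+2}{2}=\tfrac{3n^2+1}{4}$. That diagonal blockade, not a staircase of axis-parallel slabs, is where the constant $\tfrac34$ comes from, and without it your budget claim ``$|S_r|\le(\tfrac34+o(1))n^2$'' has no construction behind it.

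The rest of your outline (occupy a cut-set, identify the robber's component, advance the blockade monotonically, reuse the standing cops and pay only a thin extra layer per phase) matches the actual strategy, and your worry about transient holes is handled cheaply once the right cut-set is in place: with the blockade on $S_m$, a reserve team of only $(n+1)/2$ cops first occupies the vertices of $S_{m-1}$ with third coordinate $n-1$ (taking several rounds, during which the blockade never moves), and then in a single cop move every blocking cop not in the $z=0$ plane decreases its third coordinate, landing the whole team on $S_{m-1}$. So the ``delicate bookkeeping'' you flag as the hard part is in fact routine; the missing ingredient is the identification and counting of the diagonal cut-set itself.
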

\begin{proof}
    Since we seek an asymptotic bound, by Theorem \ref{thm:monotone} it suffices to consider the case where $n$ is odd.  Let $G=P_n\cart P_n\cart P_n$; we give a strategy for $(0.75+o(1))n^2$ cops to catch a robber on $G$.  Loosely, the cops' strategy works as follows.  The cops will be divided into two groups: a group of $(3n^2+1)/4$ \textit{blocking cops} and a group of $(n+1)/2$ \textit{reserve cops}.  The blocking cops will occupy a cut-set $S$ of $G$, which we refer to as the cops' \textit{blockade}.  The robber must choose an initial position in some component of $G\setminus S$, which we refer to as the \textit{robber's component}.  The blockade prevents the robber from leaving his component.  Over the course of several rounds, the cops will gradually move the blockade so as to repeatedly decrease the size of the robber's component, leading to the robber's eventual capture.  
    
    For $m \in \{0, 1, \dots, 3(n-1)\}$, let $S_m$ be the set of all vertices in $G$ whose coordinates sum to $m$.  Let $m^* = 3(n-1)/2$; the cops' initial blockade will be the set $S_{m^*}$.  Initially, one blocking cop begins on each vertex of $S_{m^*}$.  To count the number of cops needed to occupy all of $S_{m^*}$, we must count the number of nonnegative-integer triples $(x,y,z)$ such that $x+y+z=m^*$.  This is equivalent to the number of $m^*$-combinations with repetition from 3 elements, which is $\binom{m^*+2}{2}$.  However, not every such triple corresponds to a vertex in $G$, since $x$, $y$, or $z$ might exceed $n-1$.  The number of these triples in which $x \ge n$ is equivalent to the number of $(m^*-n)$-combinations with repetition from 3 elements, which is $\binom{m^*-n+2}{2}$; by symmetry, this also counts the number of triples in which $y \ge n$ and those in which $z \ge n$.  Consequently, we have 
    %$i\in\{0,1,\dots,\frac{n-1}{2}\}$, on the $i$th plane of $G$, there is a cop at each vertex $(x,y,i)$, where $x+y=(n-1)/2+i$.  There are $\displaystyle\binom{\frac{n-1}{2}+i+2-1}{1}=\frac{n-1}{2}+i+1$ integral solutions to this equation, and therefore $(n-1)/2+i+1$ on the $i$th plane of $G$.  By symmetry, we may count $|S|$ by taking twice the number of cops collectively in planes $1,2,\dots,\frac{n-1}{2}$.  We have,
    %\begin{align*}
    %    |S|&=2\cdot\sum_{i=0}^{(n-1)/2}\left(\frac{n-1}{2}+i+1\right)\\
    %     &=2\cdot\left(\frac{n+1}{2}\cdot\frac{n-1}{2}+\frac{\frac{n-1}{2}\cdot\left(\frac{n-1}{2}-1\right)}{2}+\frac{n-1}{2}+1\right)\\
    %     &=\frac{3n^2+5}{4}.
    % \end{align*}
    \begin{align*}
    \size{S} &= \binom{m^*+2}{2}-3\binom{m^*-n+2}{2}\\
             &= \frac{(m^*+2)(m^*+1)}{2}-3\frac{(m^*-n+2)(m^*-n+1)}{2}\\
             &= \frac{(3n+1)(3n-1)}{8}-3\frac{(n+1)(n-1)}{8}\\
             &= \frac{9n^2-1}{8}-\frac{3n^2-3}{8}\\
             &= \frac{3n^2+1}{4},
    \end{align*}
    so there are enough blocking cops to occupy all vertices of $S_{m^*}$.  The reserve cops choose their initial positions arbitrarily.
%Using Lemma \ref{lem:special_boxes} with $c=\displaystyle\frac{3}{2}$, we have that \begin{align*}
%        |X|&=C_m(n,n,n)\\
%        &=\left(-\left(\frac{3}{2}\right)^2+3\cdot\frac{3}{2}-\frac{3}{2}+o(1)\right)n^2\\
%        &=\left(\frac{3}{4}+o(1)\right)n^2.
%    \end{align*} 

%there are a total of $\displaystyle\binom{3n/2-1}{2}$ such triples $(x,y,z)$ such that $x+y+z=\displaystyle\frac{3n}{2}$, but we must subtract off all triples where either $x$, $y$, or $z$ exceed $n$.  There are $\displaystyle 3\cdot\binom{n/2-1}{2}$ such triples.  In total, the number of cops we need to cover each vertex of $X$ is
%$$|X|=\displaystyle\binom{3n/2-1}{2}-3\cdot\binom{n/2-1}{2}=\frac{3}{4}n^2-2.$$
%We will add $C$ cops to occupy each of the vertices on the $n/2-1$st diagonal of the first $x$-plane of $G$ (figure?).  

     $S_{m^*}$ separates $G\setminus S_{m^*}$ into two components,  namely the subgraph of $G$ induced by those vertices whose coordinates sum to less than $m^*$ and the subgraph induced by those vertices whose coordinates sum to more than $m^*$.  (To see this, note that adjacent vertices in $G$ differ in only one coordinate, and in that coordinate, they differ by exactly one.  Consequently, the coordinate-sums of adjacent vertices differ by exactly 1, so no vertex with a coordinate-sum of $m^*-1$ or less can be adjacent to a vertex with coordinate-sum of $m^*+1$ or more.)  Suppose without loss of generality that the robber begins in the former component.

    Henceforth, the game will progress in \textit{phases}, each of which will consist of a finite number of rounds.  At the end of each phase, the blocking cops will move to decrease the size of the robber's component, while the reserve cops will help reinforce the blockade.  
    In the first phase, one reserve cop moves to each of the $(n+1)/2$ vertices of $S_{m*-1}$ whose third coordinate is $n-1$.  This may take several rounds; however, note that while the reserve cops are moving into position, the cops' blockade prevents the robber from leaving his component.  Once the reserve cops are in position, each reserve cop and each blocking cop on the $0$th plane remain in place, while all other blocking cops move down (i.e. they move so as to decrease their third coordinate).  
    
    Note that cops now occupy all vertices of $S_{m^*-1}$; we reclassify those cops in $S_{m^*-1}$ as blocking cops and the remaining cops as reserve cops.  In the next phase, the reserve cops move to cover all vertices of $S_{m^*-2}$ whose third coordinate is $n-1$, after which point the cops can shift the blockade downward to $S_{m^*-2}$.  The cops iterate this strategy, repeatedly shifting the blockade downward.  The robber's component gets smaller with each phase, so eventually he must be caught. 
    \end{proof}

Theorem \ref{thm:3D_grid_upper} established an upper bound on $\cinf(P_n \cart P_n \cart P_n)$; we now seek a lower bound.  As with $P_n \cart P_n$, the lower bound is considerably more complicated than the upper bound, so we start with a few preliminary lemmas.  

The cops' strategy in Theorem \ref{thm:3D_grid_upper} revolved around splitting $P_n \cart P_n \cart P_n$ into two equally-sized pieces as efficiently as possible, then slowly shrinking the robber's component.  To evade capture, it is crucial that the robber avoid this sort of situation.  As such, in the proof of Theorem \ref{thm:3D_grid_lower}, our lower bound on $\cinf(P_n \cart P_n \cart P_n)$, it will be useful to know how efficiently a fixed number of cops can split the graph into two pieces; even if the cops lack the numbers needed to split the graph into equally-sized pieces, we would like to know how close they can come.

Thus motivated, we introduce the following notation.
\begin{definition}
Fix positive integers $k$ and $c$ and integers $n_1, n_2, \dots, n_k$ all at least $2$, and let $G = P_{n_1} \cart P_{n_2} \cart \dots \cart P_{n_k}$.  Define $L(c; n_1, \dots, n_k)$ to be the minimum, over all subsets $S$ of $V(G)$ with $\size{S} \le c$, of the maximum size of a component in $G\setminus S$. 
\end{definition}

It is clear from the definition that $L(c; n_1, \dots, n_k)$ is nonincreasing in $c$.  Suppose $c$ is sufficiently small that $L(c; n_1, \dots, n_k) > \size{V(G)}/2$, and suppose we play on $G$ with $c$ cops.  In this situation, for every possible set $S$ of cop positions, some component of $G\setminus S$ contains at least half the vertices of $G$; we refer to this component as the {\it large component of $G$}.  (Note that the large component has size at least $L(c; n_1, \dots, n_k)$.)  

Furthermore, let $H$ denote the union of all components of $G\setminus S$ other than the large component.  We sometimes refer to $H$ as the {\it small component} of $G$.  (This is an abuse of notation because $H$ need not be connected; however, it will turn out that when the cops play ``efficiently'', the small component is in fact connected.)  In the proof of Theorem \ref{thm:3D_grid_lower}, we will need to be able to bound the sizes of the small and large components of various subgraphs of $P_n \cart P_n \cart P_n$, while playing against various numbers of cops.  To facilitate this, we introduce the following notation.

\begin{definition}
For a positive integer $m$, define
\begin{itemize}
    \item $C_m(n_1, \dots, n_k) = \size{\{(x_1,\dots,x_k) : x_1+\dots+x_k = m \text{ and } 0 \le x_i < n_i \text{ for all }i\}}$
    \item $S_m(n_1, \dots, n_k) = \size{\{(x_1,\dots,x_k) : x_1+\dots+x_k < m \text{ and } 0 \le x_i < n_i \text{ for all }i\}}$
    \item $L_m(n_1, \dots, n_k) = \size{\{(x_1,\dots,x_k) : x_1+\dots+x_k > m \text{ and } 0 \le x_i < n_i \text{ for all }i\}}$
\end{itemize}
\end{definition}

To motivate the choice of notation, let us remark that (as we will see shortly), when playing on $P_{n_1} \cart P_{n_2} \cart \dots \cart P_{n_k}$ with $C_m(n_1, \dots, n_k)$ cops, the size of the small component will be at most $S_m(n_1, \dots, n_k)$ and the size of the large component will be at least $L_m(n_1, \dots, n_k)$.

Bollob\'{a}s and Leader in \cite{BL91} completely characterized the sets $S$ of fixed size that maximize the size of the small component (and hence minimize the size of the large component).  We will not need the full power of their result; the following special case will suffice.

\begin{lemma}\label{lem:isoperimetric}
If $c \le C_m(n_1, \dots, n_k)$, then $L(c; n_1, \dots, n_k) \ge L_m(n_1, \dots, n_k)$.
\end{lemma}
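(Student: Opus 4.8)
The lemma is a vertex--isoperimetric statement about the grid $G=P_{n_1}\cart\cdots\cart P_{n_k}$: deleting $C_m(n_1,\dots,n_k)$ vertices cannot shatter $G$ into pieces that are all small. The plan is to deduce it from the theorem of Bollob\'as and Leader~\cite{BL91}, which says that among all subsets of $V(G)$ of a prescribed size, an initial segment of the \emph{simplicial order} --- order the vertices by coordinate--sum, breaking ties by a fixed rule within each level --- minimizes the vertex boundary, and hence the closed neighborhood. Two consequences of this will be used. First, writing $I_j$ for the set of vertices whose coordinate--sum is less than $j$ (an initial segment, since it is a union of complete levels), we have $|I_j|=S_j(n_1,\dots,n_k)$ and the vertex boundary of $I_j$ is precisely the $j$th level, of size $C_j(n_1,\dots,n_k)$. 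Second --- and this is the quantitative heart of the argument, extracted from~\cite{BL91} --- if $b(t)$ denotes the least vertex--boundary size among $t$--element subsets of $V(G)$, then $b(t)\ge C_m(n_1,\dots,n_k)$ for every $t$ lying between $S_m(n_1,\dots,n_k)$ and $L_m(n_1,\dots,n_k)$, with equality only at the two endpoints.

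Granting this, fix $S\subseteq V(G)$ with $|S|\le c\le C_m(n_1,\dots,n_k)$ and suppose, for contradiction, that every component of $G\setminus S$ has fewer than $L_m(n_1,\dots,n_k)$ vertices. The components of $G\setminus S$ partition a set of at least $\prod_i n_i-C_m(n_1,\dots,n_k)=S_m(n_1,\dots,n_k)+L_m(n_1,\dots,n_k)$ vertices into blocks each of size less than $L_m(n_1,\dots,n_k)$. A greedy bundling of these blocks produces a set $A$, equal to the union of some of the components, that is nonempty, is not all of $V(G)\setminus S$, and has $|A|$ strictly between $S_m(n_1,\dots,n_k)$ and $L_m(n_1,\dots,n_k)$; the one way this can fail is that $A$ is forced to equal $V(G)\setminus S$ with so few vertices that already $|S|>C_m(n_1,\dots,n_k)$, an immediate contradiction. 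Because $A$ is a union of components, any vertex adjacent to $A$ but not lying in $A$ must belong to $S$, so the vertex boundary of $A$ has at most $|S|\le C_m(n_1,\dots,n_k)$ vertices.

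By the Bollob\'as--Leader inequality the vertex boundary of $A$ has at least $b(|A|)$ vertices; by the control on $|A|$ and the properties of $b$ recalled above, $b(|A|)>C_m(n_1,\dots,n_k)$ unless $|A|$ equals one of the two endpoints, say $S_m(n_1,\dots,n_k)$ (the case of $L_m$ being symmetric). In the generic case this contradicts $|\partial A|\le C_m(n_1,\dots,n_k)$ at once. In the endpoint case, $|\partial A|\ge b(S_m(n_1,\dots,n_k))=C_m(n_1,\dots,n_k)\ge|S|\ge|\partial A|$ forces $\partial A=S$ and $A$ to be extremal for the isoperimetric problem; then by the structure of extremal sets in~\cite{BL91} we may take $A=I_m$ and $S$ to be the $m$th level, so $V(G)\setminus(A\cup S)$ is the set of vertices of coordinate--sum exceeding $m$. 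This set has exactly $L_m(n_1,\dots,n_k)$ vertices and is connected: any two of its vertices are joined by a path along which every step increases a coordinate, routed through $(n_1-1,\dots,n_k-1)$. Hence it is a single component of size $L_m(n_1,\dots,n_k)$, again contradicting the assumption. This contradiction proves the lemma.

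The step I expect to be the main obstacle is the second isoperimetric fact of the first paragraph --- pinning down exactly for which sizes $t$ the strict inequality $b(t)>C_m(n_1,\dots,n_k)$ holds, and dispatching the endpoint case via the structure of extremal sets. This is where the argument is most delicate, where the proof naturally splits according to the position of $m$ relative to the middle level $\tfrac12\sum_i(n_i-1)$, and where we lean hardest on~\cite{BL91}. (An alternative for the endpoint case that avoids invoking uniqueness of extremal sets: apply the isoperimetric inequality a second time to $V(G)\setminus(A\cup S)$ itself, which has exactly $L_m(n_1,\dots,n_k)$ vertices, boundary inside $S$, and therefore meets the closed--neighborhood isoperimetric bound with equality, so it too must be extremal --- hence connected --- and cannot break into several components each of size less than $L_m(n_1,\dots,n_k)$.)
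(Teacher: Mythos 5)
Your overall route is the same as the paper's: everything is reduced to the Bollob\'as--Leader isoperimetric theorem for the grid. The paper, however, quotes Theorem 8 of \cite{BL91} directly in the form ``when $\size{S}=C_m(n_1,\dots,n_k)$, the union of the non-largest components has size at most $S_m(n_1,\dots,n_k)$, with the level $\{x:\sum x_i=m\}$ extremal,'' after which the lemma is two lines of arithmetic plus monotonicity in $c$. Your attempt to re-derive that component statement from the weaker boundary fact ``$b(t)\ge C_m$ for $t$ between $S_m$ and $L_m$'' has a genuine gap at the greedy-bundling step. From ``every component of $G\setminus S$ has fewer than $L_m$ vertices and the components cover at least $S_m+L_m$ vertices'' it does \emph{not} follow that some union of components has size strictly between $S_m$ and $L_m$: prefix sums of component sizes grow by less than $L_m$ at each step, but the window $(S_m,L_m)$ has length only $L_m-S_m$, so the sums can jump straight over it. For instance, three components of equal size $t$ with $(S_m+L_m)/3\le t\le S_m$ (arithmetically consistent whenever $L_m\le 2S_m$, which holds even in the paper's own applications) give unions of sizes $t,2t,3t$, all at most $S_m$ or at least $L_m$, while $\size{S}=\size{V(G)}-3t\le C_m$. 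So your claim that the only failure mode forces $A=V(G)\setminus S$ with $\size{S}>C_m$ is false, and ruling out precisely these configurations --- all components smaller than $L_m$ --- is the content of the lemma; as written, the proof is missing its central step. Repairing it needs either the stronger ``small side has size at most $S_m$'' form of \cite{BL91} that the paper cites, or an additional isoperimetric argument applied to the largest component and to the union of the remaining ones.

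Two further cautions. The quantitative input you say you ``extract'' from \cite{BL91} --- strict inequality $b(t)>C_m$ at every interior $t$, plus a structure/uniqueness statement for extremal sets used in your endpoint case (and again in your fallback ``meets the bound with equality, hence extremal, hence connected'') --- is more than that theorem supplies and more than the published proof uses; note also that the nested ``initial segments of a simplicial order'' picture you invoke is Harper's theorem for the hypercube, whereas for the grid the cited result is about balls, and strictness at interior sizes is sensitive to whether the level $m$ meets the sides of the box (a level vertex with a maximal coordinate has fewer upper neighbours). You flag this as the main obstacle but do not prove it. Finally, the lemma is stated for arbitrary $m$; when $m$ exceeds the middle level we have $L_m\le S_m$, the window $(S_m,L_m)$ is empty, and your argument says nothing --- another case split you acknowledge but do not carry out.
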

\begin{proof}
    By Theorem 8 in \cite{BL91}, when $c = C_m(n_1, \dots, n_k)$, the maximum size of the small component is $S_m(n_1, \dots, n_k)$, realized when $S = \{(x_1,\dots,x_k) \in V(G)  : x_1+\dots+x_k = m\}$.  The large component consists of all vertices in $G$ that do not belong to $S$ and are not occupied by cops; consequently,
    $$L(c; n_1, \dots, n_k) = \prod_{i=1}^k n_k - S_m(n_1, \dots, n_k) - C_m(n_1, \dots, n_k) = L_m(n_1, \dots, n_k).$$
    When $c < C_m(n_1, \dots, n_k)$, the claim holds because $L(c; n_1, \dots, n_k)$ is nonincreasing in $c$.\looseness=-1
\end{proof}
Intuitively, Lemma \ref{lem:isoperimetric} states that the most ``efficient'' way for a limited number of cops to divide $G$ into two components is to occupy the set of vertices whose coordinates sum to $m$, for some $m$.

% special box sizes
In the lemma below, we determine some specific values of $C_m$ and $S_m$ that will be useful later.
\begin{lemma}\label{lem:special_boxes}
Fix a positive integer $n$ and let $m = c(n-1)$ for some positive real number $c$.  
\begin{enumerate}
% (a): for n/2 < m <= n, S_m = 
%      for n < m <= 3n/2, S_m = 
\item[(a)] If $c \le 1,$ then:
\begin{itemize}
\item $C_m(n,n) = \displaystyle (c+o(1))n$, and
\item $S_m(n,n) = \displaystyle \left (\frac{1}{2}c^2 + o(1)\right) n^2$.
\end{itemize}

\medskip\medskip
\item[(b)] If $n$ is even and $0.5 \le c \le 1,$ then: 
\begin{itemize}
\item $C_m(n/2-1,n/2-1,n) = \displaystyle \left (-\frac{1}{2}c^2+c-\frac{1}{4}+o(1)\right )n^2$, and
\item $S_m(n/2-1,n/2-1,n) = \displaystyle \left (-\frac{1}{6}c^3+\frac{1}{2}c^2-\frac{1}{4}c+\frac{1}{24}+o(1)\right )n^3$.
\end{itemize}

\medskip\medskip
\item[(c)] If $n$ is even and $0.5 \le c \le 1,$ then: 
\begin{itemize}
\item $C_m(n/2-1,n,n) = \displaystyle \left (\frac{1}{2}c-\frac{1}{8}+o(1)\right )n^2$, and
\item $S_m(n/2-1,n,n) = \displaystyle \left (\frac{1}{4}c^2-\frac{1}{8}c+\frac{1}{48}+o(1)\right )n^3$.
\end{itemize}

\medskip\medskip
\item[(d)] If $c \le 1$, then:
\begin{itemize}
\item $C_m(n,n,n) = \displaystyle \left (\frac{1}{2}c^2+o(1)\right )n^2$, and
\item $S_m(n,n,n) = \displaystyle \left (\frac{1}{6}c^3+o(1)\right )n^3$.
\end{itemize}
\medskip
If instead $1 < c \le 1.5,$ then: 
\begin{itemize}
\item $C_m(n,n,n) = \displaystyle \left (-c^2+3c-\frac{3}{2}+o(1)\right )n^2$, and
\item $S_m(n,n,n) = \displaystyle \left (-\frac{1}{3}c^3+\frac{3}{2}c^2-\frac{3}{2}c+\frac{1}{2}+o(1)\right )n^3$.
\end{itemize}
\end{enumerate}
\end{lemma}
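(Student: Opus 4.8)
The plan is to reduce the entire lemma to two ``master formulas'' obtained by inclusion--exclusion, then specialize. Recall the stars-and-bars count: the number of nonnegative integer solutions of $x_1+\dots+x_k=m$ with no upper bounds is $\binom{m+k-1}{k-1}$. Imposing the constraints $x_i\le n_i-1$ and applying inclusion--exclusion over the set $T\subseteq[k]$ of coordinates on which $x_i\ge n_i$ (substituting $x_i\mapsto x_i+n_i$ on those coordinates) gives the exact identity
$$C_m(n_1,\dots,n_k)=\sum_{T\subseteq[k]}(-1)^{|T|}\binom{m+k-1-\sum_{i\in T}n_i}{k-1},$$
with the convention $\binom{a}{k-1}=0$ when $a<k-1$; applying the same argument to $x_1+\dots+x_k+x_{k+1}=m-1$, with a fresh unbounded slack variable $x_{k+1}\ge0$ converting ``$<m$'' into an equality, yields
$$S_m(n_1,\dots,n_k)=\sum_{T\subseteq[k]}(-1)^{|T|}\binom{m+k-1-\sum_{i\in T}n_i}{k},$$
again with $\binom{a}{k}=0$ for $a<k$. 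This is the same device used in the proof of Theorem~\ref{thm:3D_grid_upper}.

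Now let $n\to\infty$. In every part, $m=c(n-1)=cn+O(1)$ and each $n_i$ equals $n$ or $n/2-1$, so $n_i=a_in+O(1)$ with $a_i\in\{1,\tfrac12\}$. The $T$-term in either formula has upper entry $m+k-1-\sum_{i\in T}n_i=\bigl(c-\sum_{i\in T}a_i\bigr)n+O(1)$. Call $T$ \emph{active} if $\sum_{i\in T}a_i<c$; then the $T$-term equals $\tfrac{1}{(k-1)!}\bigl(c-\sum_{i\in T}a_i\bigr)^{k-1}n^{k-1}+O(n^{k-2})$ for $C_m$ and $\tfrac{1}{k!}\bigl(c-\sum_{i\in T}a_i\bigr)^{k}n^{k}+O(n^{k-1})$ for $S_m$, while if $\sum_{i\in T}a_i>c$ the term is eventually $0$ and if $\sum_{i\in T}a_i=c$ it is $O(1)$. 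Hence
$$C_m(n_1,\dots,n_k)=\Bigl(o(1)+\sum_{\substack{T\subseteq[k]\\ T\text{ active}}}\frac{(-1)^{|T|}}{(k-1)!}\bigl(c-\sum_{i\in T}a_i\bigr)^{k-1}\Bigr)n^{k-1},$$
and similarly $S_m=\Bigl(o(1)+\sum_{T\text{ active}}\tfrac{(-1)^{|T|}}{k!}\bigl(c-\sum_{i\in T}a_i\bigr)^{k}\Bigr)n^{k}$. Each of (a)--(d) is then a matter of listing the active subsets for the stated range of $c$ and simplifying. For (d), with $(a_1,a_2,a_3)=(1,1,1)$: when $c\le1$ only $T=\emptyset$ is active, giving $\tfrac12c^2n^2$ and $\tfrac16c^3n^3$; when $1<c\le\tfrac32$ the three singletons become active too while the pairs (with $\sum a_i=2$) do not, giving $\bigl(\tfrac12c^2-\tfrac32(c-1)^2\bigr)n^2=(-c^2+3c-\tfrac32)n^2$ and $\bigl(\tfrac16c^3-\tfrac12(c-1)^3\bigr)n^3=(-\tfrac13c^3+\tfrac32c^2-\tfrac32c+\tfrac12)n^3$. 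Parts (a), (b), (c) are identical, with $(a_i)$ equal to $(1,1)$, $(\tfrac12,\tfrac12,1)$, $(\tfrac12,1,1)$ respectively; for $c\le1$ (part (a)) or $0.5\le c\le1$ (parts (b),(c)) the active subsets are $\emptyset$ together with each singleton $\{i\}$ having $a_i=\tfrac12$, and a one-line expansion gives the claimed polynomials.

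There is no substantive obstacle; the only care needed is the bookkeeping of active subsets, i.e.\ checking that across each stated $c$-range the family of active subsets does not change (which is precisely why the thresholds $c\le1$, $0.5\le c\le1$, $1<c\le1.5$ appear) and that the borderline subsets with $\sum_{i\in T}a_i=c$ contribute only $O(1)$ and so vanish into the $o(1)$. As a sanity check I would verify the formulas at the endpoints $c\in\{\tfrac12,1,\tfrac32\}$. It is also worth noting the geometric content: $S_m(n_1,\dots,n_k)\sim n^{k}\,\mathrm{vol}_k\{t\in\mathbb{R}^k:0\le t_i\le a_i,\ \sum_i t_i\le c\}$ and $C_m(n_1,\dots,n_k)\sim n^{k-1}$ times the corresponding $(k-1)$-dimensional cross-sectional area, which explains why each quantity is piecewise polynomial in $c$ with breakpoints exactly where the hyperplane $\sum_i t_i=c$ sweeps past a vertex of the box $\prod_i[0,a_i]$.
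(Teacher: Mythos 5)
Your proposal is correct and follows essentially the same route as the paper: stars-and-bars counting with inclusion--exclusion over the violated upper-bound constraints, followed by asymptotic expansion of the resulting binomial coefficients. The only difference is presentational—you package all parts into one master formula with an ``active subset'' bookkeeping (and are, if anything, slightly more careful than the paper about the borderline and doubly-violating terms, which are $O(1)$ and absorbed into the $o(1)$), whereas the paper works out each part's exclusion terms separately.
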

\begin{proof}
\,

\begin{enumerate}
\item [(a)] $C_m(n,n)$ represents the number of ordered pairs $(x,y)$ with $0 \le x\le n-1, \, 0\le y \le n-1$ such that $x+y=m$, or equivalently, the number of ordered pairs $(x,m-x)$ such that $x\in\{0,\dots,m\}$; consequently, 
$$C_m(n,n)=m+1 = (c+o(1))n.$$ 
Similarly, $S_m(n,n)$ represents the number of ordered pairs $(x,y)$ with $0 \le x\le n-1, \, 0\le y \le n-1$ such that $x+y\le m$.  This is equivalent to the number of $m$-combinations with repetition from 3 elements, which is $\binom{m+2}{2}$.  Consequently, 
$$S_m(n,n)= \binom{m+2}{2} = \left (\frac{1}{2}c^2 + o(1)\right) n^2.$$

\item [(b)]  $C_m(n/2-1,n/2-1,n)$ represents the number of ordered triples $(x,y,z)$ with $0 \le x\le n/2-2, \, 0\le y \le n/2-2$, and $0 \le z \le n-1$ such that $x+y+z=m$.  The number of nonnegative integer ordered triples $(x,y,z)$ satisfying $x+y+z=m$ is equivalent to the number of $m$-combinations with repetition from 3 elements, which is $\binom{m+2}{2}$.  All such triples satisfy $x,y,z \ge 0$, as well as $z \le n-1$ (since $m \le n-1$); however, they do not all satisfy $0\le x\le n/2-2, \, 0\le y \le n/2-2$.  The number of these triples that {\it do not} satisfy $x \le n/2-2$ is equivalent to the number of $(m-n/2+1)$-combinations with repetition from 3 elements, which is $\binom{m-n/2+3}{2}$; by symmetry, this is also the number of triples not satisfying $y\le n/2-2$. Thus, we have
$$C_m(n/2-1,n/2-1,n) = \binom{m+2}{2} - 2\binom{m-n/2+3}{2} = \left (-\frac{1}{2}c^2+c-\frac{1}{4}+o(1)\right )n^2.$$

We can compute $S_m(n/2-1,n/2-1,n)$ similarly.  This time, we seek the number of solutions to $x+y+z \le m$ satisfying $0 \le x\le n/2-2, \, 0\le y \le n/2-2$ and $0 \le z \le n-1$.  The number of nonnegative integer solutions to $x+y+z \le m$ is equivalent to the number of $m$-combinations from 4 elements with repetition, which is $\binom{m+3}{3}$; the number not satisfying $x \le n/2-2$ (and, by symmetry, the number not satisfying $y \le n/2-2$) is $\binom{m-n/2+4}{3}$.  Thus,
$$S_m(n/2-1,n/2-1,n) = \binom{m+3}{3} - 2\binom{m-n/2+4}{3} = \left (-\frac{1}{6}c^3+\frac{1}{2}c^2-\frac{1}{4}c+\frac{1}{24}+o(1)\right )n^3.$$

\item [(c)] We can apply an argument similar to that in part (b); the only difference is that when computing both $C_m$ and $S_m$, we seek triples $(x,y,z)$ satisfying $0 \le x \le n/2-2$ and $0 \le y\le n-1, \, 0\le z \le n-1$, rather than $0 \le x\le n/2-2, \, 0\le y \le n/2-2$ and $0 \le z \le n-1$.  As in part (b), we thus have
$$C_m(n/2-1,n,n) = \binom{m+2}{2} - \binom{m-n/2+3}{2} = \left (\frac{1}{2}c-\frac{1}{8}+o(1)\right )n^2$$
and
$$S_m(n/2-1,n,n) = \binom{m+3}{3}-\binom{m-n/2+4}{3} = \left (\frac{1}{4}c^2-\frac{1}{8}c+\frac{1}{48}+o(1)\right )n^3.$$

\item [(d)] We can proceed as in parts (b) and (c), except that now we must satisfy the constraints $0 \le x\le n-1, \, 0\le y\le n-1, \, 0\le z \le n-1$.  As before, the number of solutions to $x+y+z=m$ is simply $\binom{m+2}{2}$ and the number of solutions to $x+y+z\le m$ is $\binom{m+3}{3}$.  If $c \le 1$, then the constraints do not exclude any solutions, so $C_m(n,n,n) = \binom{m+2}{2}$ and $S_m(n,n,n) = \binom{m+3}{3}$; if instead $1 < c \le 1.5$, then we must exclude solutions in which $x$, $y$, or $z$ exceeds $n-1$, hence $C_m(n,n,n) = \binom{m+2}{2}-3\binom{m-n+2}{2}$ and $S_m(n,n,n) = \binom{m+3}{3}-3\binom{m-n+3}{3}$.
\end{enumerate}
\end{proof}

We are nearly ready to establish a lower bound on $\cinf(P_n \cart P_n \cart P_n)$, but first we present some definitions that will be used in the proof.  Let $G = P_n \cart P_n \cart P_n$.  For a subgraph $H$ of $G$, let $\partial(H)$ denote the set of vertices in $H$ that have neighbors outside of $H$, and let $\text{int}(H)$ denote the subgraph induced by $V(H)\setminus \partial(H)$.

For a family $\mathcal{H}=\{H_1,H_2,\dots,H_k\}$ of subgraphs of $G$, we call $H_j$ the \textit{sparsest} member of $\mathcal{H}$ if $H_j$ contains the fewest cops among members of $\mathcal{H}$.  (A family may contain more than one sparsest member.)

Let $T$ denote the set of vertices of $G$ of the form $(x,y,z)$ with $z \ge \ceil{(n-1)/2}$.  The {\it top half} of $G$ is the subgraph of $G$ induced by $T$, while the {\it bottom half} is the subgraph induced by $V(G)\setminus T$.  Similarly, let $F$ be the set of vertices $(x,y,z)$ with $y \ge \ceil{(n-1)/2}$ and let $R$ be the set of vertices $(x,y,z)$ with $x \ge \ceil{(n-1)/2}$; the {\it front half} of $G$ is the subgraph induced by $F$, the {\it back half} is the subgraph induced by $V(G)\setminus F$, the {\it right half} is the subgraph induced by $R$, and the {\it left half} is the subgraph induced by $V(G)\setminus R$.  The {\it top-right quadrant} is the intersection of the top half and the right half; the {\it top-left quadrant}, {\it front-right quadrant}, etc. are defined analogously.  Finally, the {\it top-right-front octant} is the intersection of the top, right, and front halves of $G$; other octants are defined similarly.

We are finally ready to prove our lower bound on $\cinf(P_n \cart P_n \cart P_n)$.

\begin{theorem}\label{thm:3D_grid_lower}
For sufficiently large $n$, we have $c_\infty\left(P_n\cart P_n\cart P_n\right) >  0.7172n^2$.
\end{theorem}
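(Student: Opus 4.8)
The plan is to design a robber strategy that perpetually evades $0.7172n^2$ cops on $G = P_n \cart P_n \cart P_n$. Following the template of Theorem~\ref{thm:2D_lower}, the robber will track a position $v_i$ after his $i$th move, maintaining invariants of the form: (1) if the robber can reach some designated ``safe haven'' subgraph, then he can reach $v_i$; (2) no cop is adjacent to $v_i$; and (3) after the cops' next turn, the robber can still reach a safe haven from $v_i$. The crucial design choice is what counts as a ``safe haven.'' In the 2D case it was an empty row; here, because we have three dimensions and the cop count is a constant fraction of $n^2$ rather than all of it, the natural candidate is a large, nearly-cop-free \emph{region} rather than a single line. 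The idea is to exploit the fact that $0.7172n^2$ cops cannot simultaneously control all the ``interesting'' cut-sets of $G$: by Lemma~\ref{lem:isoperimetric}, if the cops number fewer than $C_m(n,n,n)$, then every separator they form leaves a component of size at least $L_m(n,n,n)$, so the robber always has a large region to retreat into.

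The key steps, in order: First, fix the threshold constant by solving an optimization problem. The cops want to cut $G$ efficiently; using Lemma~\ref{lem:special_boxes}(d), $c(n-1)$-coordinate-sum cuts for $1 < c \le 1.5$ have size roughly $(-c^2+3c-\tfrac32)n^2$, which is maximized near $c = 1.5$ giving $\tfrac34 n^2$ (matching the upper bound). But the robber doesn't need to defeat the globally optimal cut — he needs a hierarchical/recursive argument. The plan is to recurse on halves, quadrants, and octants (the definitions of top/bottom/left/right halves, quadrants, and octants in the excerpt are exactly set up for this): the robber repeatedly moves to the sparsest octant (or quadrant), and within that octant he plays a scaled-down version of the same game. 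Each time he descends one level, the subgraph shrinks by a factor related to $1/2$ in each relevant coordinate, but the cop budget available to ``trap'' him there is only a fraction of the total. One tracks, via the $C_m$ and $S_m$ formulas of Lemma~\ref{lem:special_boxes}(b),(c), how the effective ``cop density'' evolves as the robber descends, and the constant $0.7172$ emerges as the solution of the recursive inequality guaranteeing that at every level the density stays below the threshold $C_m$ for the relevant box shape, so that Lemma~\ref{lem:isoperimetric} always supplies a large escape component. Second, verify the three invariants at each level: property~(2) is easy (a single cop blocks at most the neighbors of one vertex, and the candidate region is large); properties~(1) and~(3) require the ``reachability plumbing'' arguments — empty shafts/columns/rows connecting the robber's current location through the large component to the next-level safe haven, analogous to but more elaborate than the 2D case, since in 3D one routes through an empty plane, then an empty column within it, then an empty shaft. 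Third, check that the recursion bottoms out: after a bounded number of descents the robber reaches a constant-size region with a constant number of cops but enough room to dodge forever, or the descent simply continues indefinitely with the region never shrinking to nothing because the density bound is strict.

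I expect the main obstacle to be step~two combined with the bookkeeping in step~one: making the recursive ``cop density'' accounting airtight. The subtlety is that when the robber commits to, say, the top-right-front octant as his safe haven, cops currently \emph{outside} that octant may rush toward it during the rounds it takes the robber to set up, so the relevant bound is not the number of cops in the octant now but the number that could be there by the time the robber needs it — exactly the kind of issue handled by the ``$S'$'' shrinking trick in Theorem~\ref{thm:2D_lower} (discard a boundary layer so that outside cops can't infiltrate the interior in one turn). Getting the octant/quadrant/half recursion to interlock so that the interior shrinkage is negligible (an $O(1)$-thick shell, hence an $o(n^2)$ cop correction) while the density inequality remains strict at the chosen constant is where the real work lies. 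A secondary difficulty is that the optimal robber strategy may need to alternate which coordinate it ``gives up'' — sometimes splitting on $z$, sometimes on $y$ or $x$ — and the constant $0.7172$ presumably comes from carefully balancing a multi-round cycle of such moves rather than a single clean descent; identifying the right cyclic pattern and proving it is self-sustaining is the heart of the argument.
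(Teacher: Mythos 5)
There is a genuine gap, and it lies in the central strategic idea rather than in the bookkeeping. Your plan has the robber \emph{descend} over time into ever-sparser octants, ``playing a scaled-down version of the same game'' inside each, with the recursion either bottoming out in a constant-size region or continuing indefinitely. An infinite-speed robber who ever commits to a region of roughly octant size (or smaller) loses against $0.7172n^2$ cops: by Lemma \ref{lem:special_boxes}(d), a diagonal cut $x+y+z=m$ isolating a corner region of volume about $n^3/8$ costs only about $\frac{1}{2}(3/4)^{2/3}n^2\approx 0.41n^2$ cops, so the cops have ample surplus to seal off any such region and then shrink it exactly as in the upper-bound strategy of Theorem \ref{thm:3D_grid_upper}. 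The density accounting you envision does not rescue this, because once the robber is confined to a small subregion the cops no longer need any expensive global separator --- only a cheap cut of that subregion. The invariant that actually works (and is what the paper maintains) is that after every cop move the robber's vertex lies in the \emph{large component of all of $G$}, i.e.\ the component containing at least half of $V(G)$; the sparsest half $H_i$, quadrant $Q_i$, and octant $O_i$ are recomputed fresh on every single turn and serve only (i) to locate an empty $3\times 5\times 5$ block whose center becomes $v_i$, and (ii) to certify large-component membership by a chain of counting arguments: the robber's accessible vertices in $\text{int}(O_i)$ exceed the maximum small-component size of $\text{int}(O_i)$ (Lemma \ref{lem:isoperimetric} with $\le 0.7172n^2/8$ cops), which forces access to $>0.104n^3$ vertices, which exceeds the small-component bound for $\text{int}(Q_i)$, and so on up through $\text{int}(H_i)$ to $G$ itself.

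You correctly anticipated two ingredients --- the $\text{int}(\cdot)$ boundary-layer trick to neutralize cops entering from outside in one turn, and the role of Lemmas \ref{lem:isoperimetric} and \ref{lem:special_boxes} --- but your proposal never formulates the global certification step, and the constant is not ``the solution of a recursive density inequality'' balanced over a cyclic pattern of coordinate splits. It is pinned down by one interlocking chain evaluated per turn: with $0.7172n^2$ cops, the octant bound gives access $>0.021n^3$, hence $>0.104n^3$; the quadrant step (Lemma \ref{lem:special_boxes}(b), $m=\tfrac34(n-1)$) gives access $>0.15n^3$; the half step (Lemma \ref{lem:special_boxes}(c), $m=0.96721(n-1)$) gives access $\ge 0.366193n^3$; and finally Lemma \ref{lem:special_boxes}(d) with $m=1.3189(n-1)$ gives $C_m(n,n,n)>0.7172n^2$ and $S_m(n,n,n)<0.36616n^3<0.366193n^3$, so the robber cannot be in the small component of $G$. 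Without that final global comparison your invariants (1)--(3), phrased in terms of reaching a ``safe haven'' region, do not prevent the cops from quietly building a cheap diagonal wall around whatever haven the robber is using and collapsing it.
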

\begin{proof}
    %\item
        We suppose for convenience that $n$ is divisible by 10; the claim for $n$ not divisible by 10 can be handled with a similar argument in conjunction with Theorem \ref{thm:monotone}.  Let $G=P_n \cart P_n \cart P_n$.  We will give a strategy for the robber to evade $\left \lfloor 0.7172 n^2\right\rfloor$ cops on $G$.
        
        We begin with a high-level overview of the robber's strategy.  Let $v_0$ denote the robber's initial position, and for $i\geq 1$, let $v_i$ denote the robber's position after his $i$th move.  The robber will choose each $v_i$ so as to satisfy the following properties:
        \begin{enumerate}
            \item
                $v_i$ is in the large component of $G$ before the cops' $(i+1)$st move.
            \item
                No cop is adjacent to $v_i$.
            \item
                No matter how the cops move in their $(i+1)$st turn, $v_i$ will remain in the large component.
        \end{enumerate}
        The robber will play as follows.  To begin the game, he will look at the cops' initial positions and choose his own initial position $v_0$ satisfying properties (1)-(3).  Since $v_0$ satisfies property (2), the cops cannot capture him on their first turn.  After the cops move, the robber will choose $v_1$; he aims to move to $v_1$, so we must show that he can actually reach $v_1$ from $v_0$.  Since $v_0$ satisfied property (3) before the cops' turn, it must still belong to the large component of $G$.  Since $v_1$ satisfies property (1), it also belongs to the large component of $G$.  Thus, since both $v_0$ and $v_1$ belong to the large component, there must be a cop-free path from $v_0$ to $v_1$, hence the robber can reach $v_1$.  Once again, since $v_1$ satisfies property (2), the cops cannot capture the robber on their ensuing turn.  The robber waits until the cops move, chooses $v_2$, moves there, and so forth.  He can repeat this process, and thus evade capture, indefinitely.

    Fix $i\geq 0$; we explain how to choose $v_i$.  Consider the state of the game immediately after the cops' $i$th turn (or, if $i=0$, immediately after their initial placement).   First, the robber identifies the sparsest half of $G$, which we denote $H_i$.  Next, he identifies the sparsest quadrant of $G$ contained within $H_i$; denote this quadrant by $Q_i$.  Finally, he identifies the sparsest octant of $G$ contained within $Q_i$; denote this octant by $O_i$.  
    %Assume henceforth that $O_i$ is the top-right-front octant; a similar argument suffices if it is any other octant.
    
    Since $O_i$ contains at most one-eighth of the total number of cops, it contains at most $0.08965n^2$ cops.  The $n/2$ planes in $O_i$ can be partitioned into $n/10$ groups of five consecutive planes.  One of these groups must contain at most $0.8965n$ cops; let $X_i$ denote the subgraph induced by some such group of planes.
    
    The robber seeks a subgrid of $O_i$ containing very few cops. 
    Since
    $$\displaystyle 0.8965\cdot n\cdot\frac{5}{\frac{n}{2}}=8.965<9,$$
    before the cops' turn, $X_i$ must contain an $n/2 \times 5 \times 5$ subgrid (that is, a subgrid isomorphic to $P_{n/2} \cart P_{5} \cart P_5$) containing fewer than nine cops.  (Refer to Figure \ref{fig:cube_lower}.)  Let $\hat{X_i}$ denote this subgrid.  Provided that %$n \ge 54$ (i.e. $n/2 \ge 27$), 
    $n$ is sufficiently large, within $\hat{X_i}$ there must be a $3 \times 5 \times 5$ subgrid $\widetilde{X_i}$ that contains no cops.  The subgraph $\text{int}(\widetilde{X_i})$ is (or contains) a $3 \times 3 \times 1$ grid; the robber chooses $v_i$ to be the vertex in the center of this grid. 

\begin{center}    
\begin{figure}[ht]
\includegraphics[width=4.4cm]{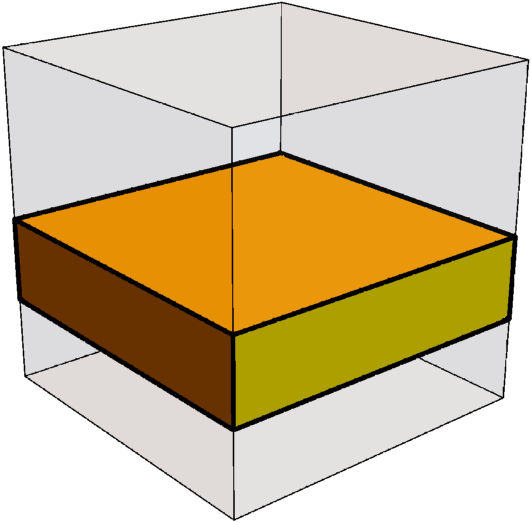}
\quad\quad\quad
\includegraphics[width=4.4cm]{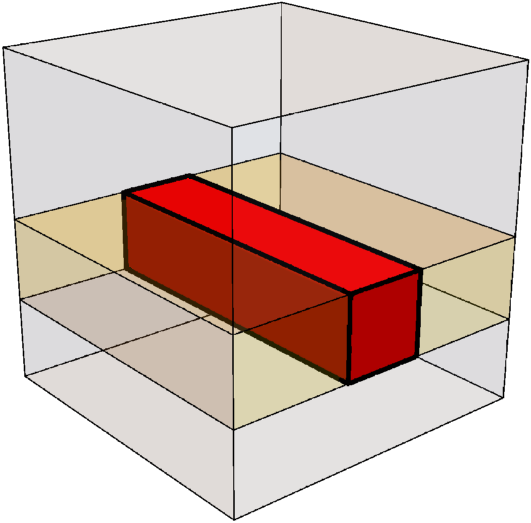}
\quad\quad\quad
\includegraphics[width=4.4cm]{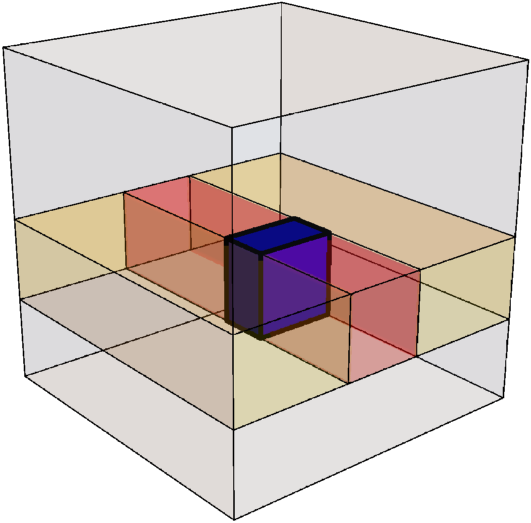}
\caption{Important subgraphs of $O_i$.  From left: $X_i$, $\hat{X_i}$, and $\widetilde{X_i}$.}
\label{fig:cube_lower}
\end{figure}
\end{center}
    
    Property (2) holds since all vertices adjacent to $v_i$ belong to $\widetilde{X_i}$, which has no cops.  We next verify properties (1) and (3).  Note that property (3) actually implies property (1): if we can show that $v_i$ must be in the large component of $G$ after the cops' $(i+1)$st move no matter how they move, then $v_i$ must have been in the large component of $G$ before the cops' $(i+1)$st move, since all of the cops could have chosen to stay put on their $(i+1)$st move.  We will now argue that (3), and therefore also (1), holds for $v_i$.
    
    The remainder of the proof contains some rather technical details.  To provide some intuition for the reader, we begin with a rough outline of what follows.  First, we argue that $v_i$ must belong to the large component of $\text{int}(O_i)$; we do so by showing that a robber at $v_i$ has access to more vertices of $\text{int}(O_i)$ than the small component could possibly contain.  We next argue that since the large component of $\text{int}(O_i)$ is larger than the small component of $\text{int}(Q_i)$, the robber has access to more vertices than are in the small component of $\text{int}(Q_i)$ and thus $v_i$ must be in the large component of $\text{int}(Q_i)$.  We use similar arguments to prove that $v_i$ must belong to the large component of $\text{int}(H_i)$ and, finally, to the large component of $G$.  
    
    We will now argue that $v_i$ must belong to the large component of $\text{int}(O_i)$ or, equivalently, that a robber at $v_i$ must be in the large component of $\text{int}(O_i)$.  There were no cops in $\widetilde{X_i}$ before the cops' move, so there can be no cops in $\text{int}(\widetilde{X_i})$ after the cops' move.  Hence, the robber can reach all vertices of $\text{int}(\widetilde{X_i})$ and can thus reach at least nine columns of $\text{int}(\hat{X}_i)$.  Likewise, there were at most eight cops in $\hat{X}_i$ before the cops' move, so there are at most eight cops in $\text{int}(\hat{X}_i)$ after the cops' move; thus one of the nine columns of $\text{int}(\hat{X}_i)$ that the robber can reach must be empty.  Moreover, the robber can reach all but at most eight vertices in some column of the second plane of $\text{int}(\hat{X}_i)$: either the empty column already belongs to the second plane, or it is directly above or below some column in the second plane, enabling the robber to reach any cop-free vertex in that column.
    
    %There were at most $0.8965n$ cops in $X_i$ before the cops' turn, so there are at most $0.8965n$ cops in $\text{int}(X_i)$ after the cops' turn.  Note that no two planes in $\text{int}(X_i)$ both contain $n/2-1$ or more cops.  Therefore, either the robber's empty column of $\text{int}(\hat{X}_i)$ belongs to a plane in $\text{int}(X_i)$ with fewer than $n/2-1$ cops, or an adjacent plane in the robber can use the empty column to reach an empty row of a adjacent plane in $\text{int}(X_i)$, which must then contain fewer than $n/2-1$ cops.  In either case, the robber can reach an empty row or column of some plane in $\text{int}(O_i)$ containing fewer than $n/2-1$ cops.
    
    %$\text{int}{X_i}$ There are at least two columns in  $\text{int}(\hat{X_i})$ without any cops, so the robber may move to an empty column in $\text{int}(\hat{X_i})$.  Since there were at most $0.3586 n$ cops in $X_i$ before the cops' $(i+1)$st move, there must be at most $0.3586 n$ cops in $\text{int}(X_i)$. Thus, at least one of the two planes in $\text{int}(X_i)$ contains fewer than $0.1793n$ cops; in particular, it has strictly fewer than  $n/2-1$ cops.  This plane has $n/2-1$ rows, so it must have at least one empty row; the robber may use his empty column in $\text{int}(\hat{X_i})$ to reach this empty row.  
    
    We aim to show that the robber is in the large component of $\text{int}(O_i)$.  Toward this end, we first claim that he can reach an empty row or column within some plane $P$ of $\text{int}(O_i)$ that contains fewer than $0.3n$ cops.  There were at most $0.8965n$ cops in $X_i$ before the cops' turn, so there are at most $0.8965n$ cops in $\text{int}(X_i)$ after the cops' turn.  Since there are at least three planes in $\text{int}(X_i)$, one of the first three planes must have fewer than $0.3n$ cops; let $P$ denote some such plane.  We have already shown that the robber can reach an empty column in $\text{int}(X_i)$; let $P'$ denote the plane containing this column.  If $P' = P$, then we are done.  If instead $P$ and $P'$ are adjacent planes, then the robber can use the empty column in $P'$ to reach an empty row in $P$ and proceed from there to an empty column in $P$.  Finally, suppose $P$ and $P'$ are not adjacent.  It must be that $P$ is either the first or third plane of $\text{int}(\hat{X}_i)$.  We argued above that the robber can reach all but at most eight vertices of some column in the second plane of $\text{int}(\hat{X}_i)$; using this column, he can reach all but at most eight rows of $P$.  Since $P$ has $0.5n-1$ columns and fewer than $0.3n$ cops, it must have at least $0.2n$ empty rows.  If $n$ is sufficiently large, then $P$ has more than eight empty rows, so the robber can reach at least one, after which he may use this empty row to reach an empty column of $P$.  In any case, the robber can reach an empty column of $P$, as claimed.
    
    To prove that the robber must be in the large component of $\text{int}(O_i)$, we will show that the number of vertices of $\text{int}(O_i)$ to which he has access exceeds the number of vertices in the small component of $\text{int}(O_i)$.  As noted above, $P$ must have at least $0.2n$ empty rows and $0.2n$ empty columns.  The empty rows collectively contain at least $0.1n^2-0.2n$ vertices, the empty columns also contain at least $0.1n^2-0.2n$, and their intersection contains at most $0.04n^2$.  Consequently, the empty rows and columns collectively contain at least $0.16n^2-0.4n$ vertices, and the robber can access all of these.  Since $\text{int}(O_i)$ contains at most $0.08965n^2$ cops, no more than $0.08965n^2$ shafts can be nonempty, so the robber has access (through $P$) to at least $0.07035n^2-0.4n$ empty shafts in $\text{int}(O_i)$.  Each of these shafts contains $n/2-1$ vertices, so together they contain at least $(0.07035n^2 -0.4n)\cdot (0.5n-1)$ vertices.
    
    Note that $\text{int}(O_i)$ is a $(n/2-1) \times (n/2-1) \times (n/2-1)$ grid.  Hence, applying Lemma \ref{lem:special_boxes}(d) with $m=n/2-2$, we have \begin{align*}
        C_{m}(n/2-1,n/2-1,n/2-1)&=\left(\frac{1}{2}\cdot\left(1\right)^2+o(1)\right)\cdot \left(\frac{n}{2}-1\right)^2
        =\left(\frac{1}{8}+o(1)\right)n^2
    \end{align*}
    and
    \begin{align*}
    S_m(n/2-1,n/2-1,n/2-1) &=\left(\frac{1}{6}\cdot \left(1\right)^3+o(1)\right)\cdot\left(\frac{n}{2}-1\right)^3 = \left(\frac{1}{48}+o(1)\right)n^3,
    \end{align*}
    which is less than $0.021n^3$ provided that $n$ is sufficiently large. The number of cops in $\text{int}(O_i)$ is at most $0.08965 n^2$, which is smaller than $C_m(n/2-1,n/2-1,n/2-1)$.  Thus, by Lemma \ref{lem:isoperimetric}, the small component of $\text{int}(O_i)$ has fewer than $0.021n^3$ vertices.  The robber has access to more than $0.021n^3$ vertices, so he cannot be in the small component of $\text{int}(O_i)$; he must therefore be in the large component of $\text{int}(O_i)$.  For future reference, note also that the large component of $\text{int}(O_i)$ must contain more than $0.104n^3$ vertices (provided $n$ is sufficiently large), since $\text{int}(O_i)$ itself contains $(n/2-1)^3$ vertices, of which fewer than $0.021n^3$ belong to the small component and at most $0.08965 n^2$ are occupied by cops.  Consequently, the robber can access more than $0.104n^3$ vertices in $\text{int}(O_i)$.%The number of vertices in the large component of $\text{int}(O_i)$ is bounded above by $\left(\frac{n}{2}\right)^3-0.01n^3$, which simplifies to $0.115n^3$; since the large component of $\text{int}(O_i)$ contains more than half of the vertices in $O_i$, it must be contained within the large component of $O_i$.  Thus the robber is in the large component of $O_i$, as claimed.

%    Now,
%    \begin{align*}
%        L_m(n/2,n/2,n/2)&=\frac{n^3}{8}-S_m(n/2,n/2,n/2)\\
%        &\geq \frac{n^3}{8}-0.01n^3\\
%        &\geq0.115n^3.
%    \end{align*}
   % Since the robber can reach at least $0.04n^3$ vertices in $O_i$, which exceeds $S_m(n/2,n/2,n/2)$, the robber must be in the large component of $O_i$.
    % So, by Lemma \ref{lem:isoperimetric}, the number of vertices in $O_i$ that the robber can reach must be at least \begin{align*}
    %     L_m(n/2,n/2,n/2)&=\frac{n^3}{8}-S_m(n/2,n/2,n/2)
    %     \geq \frac{n^3}{8}-0.01n^3
    %     \geq0.115n^3.
    % \end{align*}
    % %Let $Q_i$ denote the first quadrant of $G$.  

    We next argue that the robber must be in the large component of $\text{int}(Q_i)$.  The number of cops in $\text{int}(Q_i)$ is at most $0.1793 n^2$, which is less than $0.2n^2$.  Note that $\text{int}(Q_i)$ is an $n \times (n/2-1) \times (n/2-1)$ grid.  Applying Lemma \ref{lem:special_boxes}(b) using $m=\frac{3}{4}(n-1)$, we have 
    \begin{align*}
        C_m(n/2-1,n/2-1,n)&=\left(-\frac{1}{2}\cdot\left (\frac{3}{4}\right )^2+\frac{3}{4}-\frac{1}{4}+o(1)\right)n^2
        =\left (\frac{7}{32}+o(1)\right)n^2,
    \end{align*}
    which is greater than $0.2n^2$ (provided $n$ is sufficiently large).
    In addition, 
    \begin{align*}
        S_m(n/2-1,n/2-1,n)&=\left(-\frac{1}{6}\cdot\left(\frac{3}{4}\right)^3+\frac{1}{2}\cdot\left(\frac{3}{4}\right)^2-\frac{1}{4}\cdot\frac{3}{4}+\frac{1}{24}+o(1)\right)n^3 = \left (\frac{25}{384}+o(1)\right )n^3,
    \end{align*}
    which is less than $0.1n^3$ (once again, provided $n$ is sufficiently large).  As noted earlier, the robber can reach more than $0.1n^3$ vertices in $\text{int}(O_i)$, and hence more than $0.1n^3$ vertices in $\text{int}(Q_i)$; thus, by Lemma \ref{lem:isoperimetric}, he must be in the large component of $\text{int}(Q_i)$.  Note that $\text{int}(Q_i)$ contains $(\frac{1}{4}+o(1))n^3$ vertices, of which fewer than $0.1n^3$ belong to the small component and only a quadratic number are occupied by cops; hence the large component of $\text{int}(Q_i)$ contains more than $0.15n^3$ vertices (provided $n$ is sufficiently large).
    
    A similar argument shows that the robber must be in the large component of $\text{int}(H_i)$.  The number of cops in $\text{int}(H_i)$ is at most $0.3586n^2$.  By Lemma \ref{lem:special_boxes}(c), using $m=0.96721(n-1)$, we have 
        \begin{align*}
            C_m(n/2-1,n,n)&=\left(\frac{1}{2}\cdot 0.96721-\frac{1}{8}+o(1)\right) n^2
                        > 0.3586n^2
        \end{align*}
        and 
        \begin{align*}
            S_m(n/2-1,n,n) &= \left(\frac{1}{4} \cdot 0.96721^2 - \frac{1}{8}\cdot 0.96721 + \frac{1}{48} + o(1)\right)n^3
               < 0.133807n^3
        \end{align*}
        (provided, as usual, that $n$ is sufficiently large).  The robber can reach more than $0.133807n^3$ vertices of $\text{int}(Q_i)$, so he must be in the large component of $\text{int}(H_i)$.  Of the $(\frac{1}{2}+o(1))n^3$ vertices in $\text{int}(H_i)$, under $0.133807n^3$ belong to the small component and at most a quadratic number are occupied by cops, so the large component contains at least $0.366193n^3$ vertices (assuming once again that $n$ is sufficiently large).
        
        Finally, we show that the robber must be in the large component of $G$, which will complete the proof.  By Lemma \ref{lem:special_boxes}(d) with $m=1.3189(n-1)$, we have
        \begin{align*}
            C_m(n,n,n)&=\left(-1.3189^2 + 3\cdot 1.3189 - \frac{3}{2}+o(1)\right) n^2
            > 0.7172n^2
        \end{align*}
        and 
        \begin{align*}
            S_m(n,n,n) &= \left(-\frac{1}{3} \cdot 1.3189^3 + \frac{3}{2}\cdot 1.3189^2 - \frac{3}{2}\cdot 1.3189 + \frac{1}{2} + o(1)\right)n^3
               < 0.36616n^3
        \end{align*}
        (as always, provided that $n$ is sufficiently large).  The robber can reach at least $0.366193n^3$ vertices in $\text{int}(H_i)$, and thus in $G$; this is greater than the number of vertices in the small component of $G$.  Hence the robber (and, by extension, $v_i$) must be in the large component of $G$.  We have thus shown that $v_i$ satisfies properties (1) and (3), which completes the proof.
\end{proof}

As a corollary of Theorem \ref{thm:3D_grid_lower}, we obtain a lower bound on the treewidth of $\cinf(P_n \cart P_n \cart P_n)$.  Alon and Mehrabian \cite{AM15} showed that for every graph $G$, we have $\cinf(G) \le \mathrm{tw}(G)+1$; consequently, our lower bound on $\cinf(P_n \cart P_n \cart P_n)$ also establishes a lower bound on $\mathrm{tw}(P_n \cart P_n)$:

\begin{corollary}\label{cor:treewidth}
For sufficiently large $n$, we have $\mathrm{tw}\left(P_n\cart P_n\cart P_n\right) \ge 0.7172n^2$.
\end{corollary}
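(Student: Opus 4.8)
The plan is to obtain this as a one-line consequence of Theorem \ref{thm:3D_grid_lower} together with a known relationship between the infinite-speed cop number and treewidth. Alon and Mehrabian \cite{AM15} proved that $\cinf(G) \le \mathrm{tw}(G) + 1$ for every graph $G$; rearranging gives $\mathrm{tw}(G) \ge \cinf(G) - 1$. Taking $G = P_n \cart P_n \cart P_n$ and applying Theorem \ref{thm:3D_grid_lower}, we get $\mathrm{tw}(P_n \cart P_n \cart P_n) \ge \cinf(P_n \cart P_n \cart P_n) - 1 > 0.7172 n^2 - 1$ for all sufficiently large $n$.

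The only thing that needs care is the additive $1$ lost in passing through the Alon--Mehrabian bound, since $0.7172 n^2$ need not be an integer. To absorb it, I would observe that the robber strategy in the proof of Theorem \ref{thm:3D_grid_lower} actually defeats more than $\lfloor 0.7172 n^2 \rfloor$ cops: every comparison made in that argument holds with slack of order $n^2$ (in the cop-count comparisons against $C_m$) or order $n^3$ (in the comparisons of the number of reachable vertices against $S_m$). In particular, the final step compares the cop count against $C_m(n,n,n) = (-c^2+3c-\tfrac32+o(1))n^2$ with $c=1.3189$, whose leading coefficient strictly exceeds $0.7172$, and likewise the reachable-vertex count strictly exceeds $S_m(n,n,n)$ to leading order. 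Consequently, for any fixed constant the same argument goes through for $n$ large, so in fact $\cinf(P_n \cart P_n \cart P_n) \ge \lceil 0.7172 n^2 \rceil + 1$ once $n$ is large enough. This yields $\mathrm{tw}(P_n \cart P_n \cart P_n) \ge \lceil 0.7172 n^2 \rceil \ge 0.7172 n^2$, as desired.

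Since the argument rests entirely on one cited inequality plus a result already established in this section, I do not anticipate any genuine obstacle; the only place requiring a moment's thought is the integrality/additive-constant point above, which is painlessly folded into the ``sufficiently large $n$'' hypothesis of the statement.
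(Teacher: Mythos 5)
Your proposal matches the paper's (implicit) proof exactly: the corollary is obtained by combining the Alon--Mehrabian inequality $\cinf(G)\le \mathrm{tw}(G)+1$ with the lower bound of Theorem \ref{thm:3D_grid_lower}. Your additional paragraph absorbing the lost additive $1$ via the slack in the numerical constants (e.g. $-1.3189^2+3\cdot 1.3189-\tfrac{3}{2}\approx 0.717203>0.7172$, and similarly in the other cop-count and reachable-vertex comparisons) is correct and handles an integrality point that the paper passes over silently.
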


To the best of our knowledge, the bound in Corollary \ref{cor:treewidth} is novel.
The treewidth of high-dimensional grid graphs was previously studied by Hickingbotham and Wood \cite{HW21}, who showed that for positive integers $n_1 \ge n_2 \ge \dots \ge n_d$,
$$\mathrm{tw}(P_{n_1} \cart P_{n_2} \cart \dots \cart P_{n_d}) = \Theta\left(\prod_{j=2}^d n_j\right);$$
however, although this result determines the order of growth of $\mathrm{tw}(P_{n_1} \cart P_{n_2} \cart \dots \cart P_{n_d})$, no attempt was made to pin down the coefficient on the leading term.  We also note that Otachi and Suda \cite{OS11} determined the pathwidth of three dimensional grid graphs, which yields the upper bound
$$\mathrm{tw}(P_n \cart P_n \cart P_n) \le \mathrm{pw}(P_n \cart P_n \cart P_n) = \floor{\frac{3n^2+2n}{4}}.$$

\subsection{Four or more dimensions}

We conclude the section with a brief look at higher-dimensional grids.  The ideas used to prove Theorem~\ref{thm:3D_grid_upper} extend naturally to higher dimensions, as we next demonstrate.

\begin{theorem}\label{thm:4D_grid_upper}
If $G$ is the $d$-fold Cartesian product of copies of $P_n$, then
$$c_{\infty}(G) \le (1+o(1)) \sum_{k=0}^{\floor{d/2}}(-1)^k \binom{d}{k}\binom{\floor{d(n+1)/2} - kn - 1}{d-1}.$$
\end{theorem}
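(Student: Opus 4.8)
The plan is to run the blockade-and-descent strategy of Theorem~\ref{thm:3D_grid_upper} with $d$ indices in place of three, replacing the diagonal planes $\{x+y+z=m\}$ by the diagonal layers $S_m=\{(x_1,\dots,x_d)\in V(G):x_1+\dots+x_d=m\}$. I would take $m^{*}=\floor{d(n+1)/2}-d$, which equals $\floor{d(n-1)/2}$ and hence is a largest layer, because the layer sizes of a product of chains are symmetric and unimodal about $d(n-1)/2$. One blocking cop is placed on each vertex of $S_{m^{*}}$, and $\Theta(n^{d-2})$ reserve cops are held in hand. The first step is to check that $\size{S_{m^{*}}}$ is exactly the sum in the statement: by the standard inclusion--exclusion for bounded compositions, the number of $(x_1,\dots,x_d)$ with $\sum x_i=m^{*}$ and each $x_i\le n-1$ is $\sum_{k\ge0}(-1)^{k}\binom{d}{k}\binom{m^{*}+d-1-kn}{d-1}$; since $m^{*}+d-1=\floor{d(n+1)/2}-1$, and since for large $n$ the summand vanishes once $k>\floor{d/2}$, this truncates to $\sum_{k=0}^{\floor{d/2}}(-1)^{k}\binom{d}{k}\binom{\floor{d(n+1)/2}-kn-1}{d-1}$. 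As $\size{S_{m^{*}}}=\Theta(n^{d-1})$ while the reserves number $O(n^{d-2})$, the total cop count is $(1+o(1))\size{S_{m^{*}}}$, the claimed bound.

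Next I would establish the structural facts that drive the descent. Since adjacent vertices of $G$ have coordinate-sums differing by exactly $1$, each $S_m$ separates $G\setminus S_m$ into a ``low'' component ($\sum x_i<m$) and a ``high'' component ($\sum x_i>m$); replacing the coordinates by $n-1-x_i$ if necessary, we may assume the robber begins in the low component. A \emph{phase} then mirrors the three-dimensional one: first the reserve cops march, over several rounds, onto the $\Theta(n^{d-2})$ vertices of $S_{m-1}$ having last coordinate $n-1$ (during which the static blockade $S_m$ keeps the robber trapped below it), and then in a single round every blocking cop with $x_d\ge1$ decrements its last coordinate while those with $x_d=0$ stay put. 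The crucial observation --- which replaces any appeal to Hall's theorem --- is that $(x_1,\dots,x_{d-1},x_d)\mapsto(x_1,\dots,x_{d-1},x_d-1)$ is an explicit bijection from $S_m\cap\{x_d\ge1\}$ onto $S_{m-1}\cap\{x_d\le n-2\}$, so after this round $S_{m-1}$ is completely occupied (the $x_d=n-1$ vertices by reserves, the rest by descended blocking cops). Hence the robber is either captured outright or confined to $\{\sum x_i\le m-2\}$, and iterating drives $m$ downward until the robber's region is exhausted and he is caught. The blocking cops stranded at $x_d=0$ --- now sitting at coordinate-sum $m$, off the new blockade $S_{m-1}$ --- are reclassified as reserves for the next phase.

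The step I expect to demand the most care --- and the real obstacle --- is showing that the supply of reserves never runs dry over the $\Theta(n)$ phases. For this I would use two facts about layer sizes of products of chains. First, unimodality about the centre gives $\size{S_m}\le\size{S_{m^{*}}}$ for every $m\le m^{*}$, so the blocking cops always suffice. Second, in the phase with blockade $S_m$ the number of reserves \emph{spent} equals $\size{\{(x_1,\dots,x_{d-1}):\sum=m-n\}}$ while the number \emph{gained} equals $\size{S_m\cap\{x_d=0\}}=\size{\{(x_1,\dots,x_{d-1}):\sum=m\}}$; since for all relevant $m$ the value $m$ is at least as close to the centre $(d-1)(n-1)/2$ of the $(d-1)$-dimensional grid as $m-n$ is, unimodality gives gained $\ge$ spent in every phase, so a single initial batch of $\Theta(n^{d-2})$ reserves lasts forever. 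I would also check the easy boundary conditions (both components of $G\setminus S_m$ nonempty, and the cross-sections above nonempty, which come down to $2\le m^{*}\le d(n-1)-2$ and $m^{*}\le(d-1)(n-1)$, both valid for $d\ge2$ and large $n$), and dispatch inconvenient values of $n$ by passing to the $d$-fold product of $P_{n+c}$ for a suitable constant $c$ via Theorem~\ref{thm:monotone}, just as in Theorem~\ref{thm:3D_grid_upper}; this changes the bound only by a factor $1+o(1)$, since $\binom{\floor{d(n+c+1)/2}-k(n+c)-1}{d-1}=(1+o(1))\binom{\floor{d(n+1)/2}-kn-1}{d-1}$.
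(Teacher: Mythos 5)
Your proposal is correct and follows essentially the same route as the paper: the same inclusion--exclusion count of the diagonal layer $S_{m}$ with $m=\floor{d(n-1)/2}$, the same blockade-plus-reserves phase structure in which reserves fill the face where one coordinate equals $n-1$ and the remaining blocking cops decrement that coordinate, and the same reclassification of stranded cops. The only difference is that you make explicit two points the paper leaves implicit --- the bijection showing $S_{m-1}$ is fully covered after the shift, and the unimodality accounting showing the reserves never run out --- which is a welcome but not divergent refinement.
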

\begin{proof}
As in the proof of Theorem \ref{thm:3D_grid_upper}, we suppose for the sake of simplicity that $n$ is odd; a similar argument suffices when $n$ is even. For nonnegative integers $a$ and $b$, let $S_{a,b}$ denote the set of vertices in the $a$-fold Cartesian product of copies of $P_n$ whose coordinates sum to $b$.  As in Theorem \ref{thm:3D_grid_upper}, $S_{a,b}$ separates $G\setminus S_{a,b}$ into two separate components: one consisting of those vertices whose coordinates sum to less than $b$ and one consisting of those vertices whose coordinates sum to greater than $b$.  We claim that 
$$\size{S_{a,b}} = \sum_{k=0}^{\floor{a/2}}(-1)^k \binom{a}{k}\binom{b - kn + a - 1}{a-1}.$$
$\size{S_{a,b}}$ counts the number of solutions to $x_1 + x_2 + \dots + x_a = b$ with each $x_i$  in $\{0, \dots, n-1\}$.  The number of nonnegative-integer solutions to $x_1 + x_2 + \dots + x_a = b$ is $\binom{b+a-1}{a-1}$; however, we must exclude solutions for which some $x_i$ exceeds $n-1$.  For $k \in \{1, \dots, \floor{a/2}\}$, the number of solutions in which $k$ different $x_i$ exceed $n-1$ is equivalent to the number of nonnegative-integer solutions to $x_1 + x_2 + \dots + x_a = b-kn$, which is $\binom{b-kn+a-1}{a-1}$.  Consequently, by the inclusion-exclusion principle, $$\size{S_{a,b}} = \sum_{k=0}^{\floor{a/2}}(-1)^k \binom{a}{k}\binom{b - kn + a - 1}{a-1},$$
as claimed.  

% Let $m = \floor{d(n-1)/2}$, and let $S$ denote the set of vertices in $G$ whose coordinates sum to $m$.  
% We aim to count the number of solutions to $x_1 + x_2 + \dots + x_d = m$ with each $x_i$  in $\{0, \dots, n-1\}$.  The number of nonnegative-integer solutions to $x_1 + x_2 + \dots + x_d = m$ is $\binom{m+d-1}{d-1}$; however, we must exclude solutions for which some $x_i$ exceeds $n-1$.  For $k \in \{1, \dots, \floor{d/2}\}$, the number of solutions in which $k$ different  $x_i$ exceed $n-1$ is equivalent to the number of nonnegative-integer solutions to $x_1 + x_2 + \dots + x_d = m-kn$, which is $\binom{m-kn+d-1}{d-1}$.  Consequently, by the inclusion-exclusion principle, $$\size{S} = \sum_{k=0}^{\floor{d/2}}(-1)^k \binom{d}{k}\binom{m - kn + d - 1}{d-1},$$
% as claimed.

We are now ready to present the cops' strategy.  Let $m = \floor{d(n-1)/2}$. We begin with $\size{S_{d,m}}+\size{S_{d-1,m-n}}$ cops, of which $\size{S_{d,m}}$ are designated \textit{blocking cops} and the remainder are designated \textit{reserve cops}.  Initially, one blocking cop occupies each vertex of $S_{d,m}$ and the reserve cops position themselves arbitrarily.  Let $v$ denote the robber's initial position.  Let $H$ denote the component of $G\setminus S_{d,m}$ consisting of all vertices whose coordinates sum to less than $m$, and suppose that the robber begins in $H$ (a similar argument works if he begins in the other component of $G\setminus S_{d,m}$). Over the course of several turns, reserve cops move to occupy every vertex in $H$ whose coordinates sum up to $m-1$ and whose first coordinate is $n-1$.  (Note that there are precisely $\size{S_{d-1,m-n}}$ such vertices, so there are enough reserve cops to occupy all of them.)  The robber must remain in $H$ during this time, since the blocking cops separate $H$ from $G\setminus H$.  

Once the reserve cops are in position, all blocking cops whose first coordinate is at least 1 move so as to decrease their first coordinate.  The cops now occupy all vertices of $S_{d,m-1}$.  All cops in $S_{d,m-1}$ are now designated blocking cops, while the rest are designated reserve cops.  The cops repeat this process: the reserve cops move to occupy vertices with first coordinate $n-1$ whose coordinates sum to $m-2$, the blocking cops decrease their first coordinates, and so on.  The size of the robber's component gradually decreases, until he is eventually captured.
\end{proof}

We remark that the techniques used to establish a lower bound on $c_{\infty}(P_n \cart P_n \cart P_n)$ should also extend to the $d$-dimensional grid, but a general analysis seems difficult; rather, the argument would likely need to be specialized to each particular value of $d$.

\end{section}

\begin{section}{Hypercubes}\label{sec:hypercubes}

We now turn our attention to the infinite-speed cop number of hypercubes.  The $n$-dimensional {\it hypercube} $Q_n$ is the graph whose vertex set consists of all $n$-bit binary strings, with two vertices adjacent if and only if the corresponding strings differ in exactly one bit.  (Equivalently, it is the $n$-fold Cartesian product of copies of $K_2$.)  Mehrabian \cite{Meh12} established the following bounds on $c_{\infty}(Q_n)$:
\begin{theorem}[\cite{Meh12}, Theorem 9.1(e)]\label{thm:mehrabian_hypercube}
For all $n \ge 1,$ there exist positive constants $\eta_1$ and $\eta_2$ such that  
$$\eta_1 \cdot \frac{2^n}{n^{3/2}} \le c_{\infty}(Q_n) \le \eta_2 \cdot \frac{2^n}{n}.$$
\end{theorem}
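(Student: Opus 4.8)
\medskip
\noindent The plan is to prove the two bounds separately; both proofs hinge on the vertex-isoperimetric inequality for the hypercube (Harper's theorem), which controls how few vertices a team of cops can occupy while still fragmenting $Q_n$.

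For the upper bound I would use the ``occupy a cut-set, then shrink the robber's component'' strategy of Theorems~\ref{thm:3D_grid_upper} and~\ref{thm:4D_grid_upper}. Sort the vertices of $Q_n$ by Hamming weight, writing $|v|$ for the number of $1$s in $v$. Since adjacent vertices differ in weight by exactly $1$, every weight level $L_k=\{v:|v|=k\}$ is a cut-set of $Q_n$, separating the vertices of weight $<k$ from those of weight $>k$. Blocking cops occupy some level $L_k$ while reserve cops assemble on the adjacent level; the blockade then slides toward the robber's side, repeatedly shrinking and eventually emptying his component, exactly as in the three-dimensional argument. The crude version of this costs the size of the largest level used, $\binom{n}{\lfloor n/2\rfloor}=\Theta(2^n/\sqrt n)$ cops -- which is essentially the treewidth bound $\cinf(G)\le\mathrm{tw}(G)+1$, since it is well known that $\mathrm{tw}(Q_n)=\Theta(2^n/\sqrt n)$. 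Sharpening this to the claimed $O(2^n/n)$ is the step I expect to be the main obstacle: one must exploit that after the first confinement the robber is trapped inside a half-cube, whose own ``middle'' levels have size only $2^{(1-\Omega(1))n}$, and recursively schedule the sequence of cuts -- re-using the cops freed up as the robber's component collapses -- so that no single cut ever requires more than $O(2^n/n)$ cops. I would set this up as an induction on the dimension of the subcube to which the robber is confined, checking at each stage that the reserve cops can be moved into position within the available number of rounds.

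For the lower bound I would hand the robber a strategy against $c$ cops, modelled on the bookkeeping of Theorem~\ref{thm:3D_grid_lower}. Writing $v_0,v_1,\dots$ for the robber's successive positions, he maintains that (1)~$v_i$ lies in a cop-free component of $Q_n$ of size greater than $2^{n-1}$ before the cops' $(i+1)$st move; (2)~no cop is adjacent to $v_i$; and (3)~$v_i$ still lies in such a component after the cops' $(i+1)$st move, however they move. To choose $v_i$ after the cops' $i$th turn, let $N$ be the union of the closed neighbourhoods of the (at most $c$) current cop positions, so $\size{N}\le c(n+1)$. By Harper's theorem, if $c(n+1)<\delta\,2^n/\sqrt n$ for a suitable absolute constant $\delta$, then deleting $N$ from $Q_n$ leaves a (necessarily unique) component $B$ with $\size{B}>2^{n-1}$; the robber takes $v_i$ to be any vertex of $B$. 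Property~(2) holds since $v_i\notin N$, so $v_i$ is at distance at least $2$ from every cop; and because each cop's next move keeps it inside $N$, the set $B$ is still cop-free and connected after that move, so $B$ witnesses properties~(1) and~(3). Hence the robber evades $c$ cops whenever $c(n+1)<\delta\,2^n/\sqrt n$, i.e.\ whenever $c\le\eta_1\,2^n/n^{3/2}$ for a suitable $\eta_1$, which gives $\cinf(Q_n)\ge\eta_1\,2^n/n^{3/2}$. The only real content is the appeal to Harper's inequality to keep the large component alive; the rest is the ``reach a safe vertex inside an empty region'' scaffolding already used in the paper, and the exponent $3/2$ is exactly the ratio of the hypercube's balanced-separator size $\Theta(2^n/\sqrt n)$ to the size $n+1$ of the neighbourhood that a single cop ``guards''.
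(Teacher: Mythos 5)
Your upper-bound plan has a genuine gap, and the framework you chose cannot close it. A sliding-blockade strategy requires the cops, at some moment, to occupy a set separating $Q_n$ into two parts each containing a constant fraction of the vertices; your very first cut is a middle weight level of size $\binom{n}{\floor{n/2}}=\Theta(2^n/\sqrt{n})$, and since the cop number is the number of cops needed at the worst moment, this first cut alone already costs $\Theta(2^n/\sqrt{n})$ --- recycling cops freed up later cannot retroactively cheapen it, and your recursive idea only addresses the later, smaller cuts inside a half. Replacing the level by a cleverer cut does not help either: Harper's theorem (the very inequality you use for the lower bound) shows that any vertex set separating $Q_n$ into two parts each of size $\Omega(2^n)$ has size $\Omega(2^n/\sqrt{n})$, so no confine-and-shrink scheme of the kind you describe can get below $\Theta(2^n/\sqrt{n})$, let alone reach the claimed $O(2^n/n)$; the ``main obstacle'' you flag is not technical but structural. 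The intended argument, stated in the paper right after the theorem, abandons confinement entirely: for every graph $G$ one has $\cinf(G)\le\gamma(G)$, because cops placed on a dominating set capture the robber on their very first move wherever he starts, and $\gamma(Q_n)\le \frac{2^{n+1}}{n+1}$, which gives the upper bound $\eta_2\cdot 2^n/n$ in one line.

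Your lower bound, by contrast, is essentially correct and takes a genuinely different route from the cited source: the paper quotes Mehrabian, whose proof passes through the connection between $\cinf$ and treewidth, whereas you give a direct evasion strategy in the style of Theorem \ref{thm:3D_grid_lower}, which is arguably more self-contained. The one assertion you should actually prove is that deleting a set $N$ with $\size{N}\le \delta 2^n/\sqrt{n}$ leaves a unique component of size greater than $2^{n-1}$: if every component had size at most $2^{n-1}$, you could group components into a union $A$ such that both $A$ and its complement in $Q_n\setminus N$ have size $\Omega(2^n)$; the vertex boundary of $A$ lies in $N$, so Harper's theorem forces $\size{N}=\Omega(2^n/\sqrt{n})$, a contradiction for $\delta$ small. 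With that lemma in hand, your bookkeeping chains together correctly (the old and new ``giant'' pieces each exceed $2^{n-1}$ vertices, hence lie in the same cop-free component, so the robber can travel from $v_i$ to $v_{i+1}$), and you obtain $\cinf(Q_n)\ge\eta_1\cdot 2^n/n^{3/2}$ as claimed.
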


The upper bound in Theorem~\ref{thm:mehrabian_hypercube} is straightforward: the infinite-speed cop number of any graph is bounded above by the domination number -- since the cops can start on a dominating set and capture the robber with their first move -- and it is well-known that $\gamma(Q_n) \le \frac{2^{n+1}}{n+1}$.  The lower bound, however, is more intricate and relies on a connection between the infinite-speed cop number of a graph and its treewidth.  We obtain a stronger lower bound using a different approach.  Inspired by the proof of Theorem 2.1 in \cite{BB15}, we show how the robber can use a potential function to evade a large number of cops.

Before proceeding, we state a technical lemma.  We will need to make use of Harper's theorem on vertex-isoperimetry in $Q_n$ \cite{Har66}, which loosely states that out of all subsets of $V(Q_n)$ having fixed size, Hamming balls are the sets with smallest closed neighborhood.  Consequently,
 for $S \subseteq V(Q_n)$, if $\size{S} \ge \sum_{i=0}^{k-1} \binom{n}{i}$ for some positive integer $k$, then $\size{N[S]} \ge \sum_{i=0}^{k} \binom{n}{i}$.

\begin{lemma}\label{lem:cube_small_component}
Let $n$ be even, and suppose we play the infinite-speed robber game on $Q_n$ with $c$ cops.  For sufficiently large $n$, if $\displaystyle c \le \frac{2^{n-3}}{n \ln n}$, then the small component has fewer than $\displaystyle \frac{1}{2}\binom{n}{n/2}$ vertices.
\end{lemma}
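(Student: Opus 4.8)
\noindent\textit{Proof plan.} I would argue by contradiction, the crux being a sharp vertex-isoperimetric estimate in $Q_n$. Since the phrase ``small component'' only makes sense once some component of $Q_n$ minus the cops is guaranteed to contain more than half the vertices, I would first check that this holds under the hypothesis $c\le 2^{n-3}/(n\ln n)$: if some placement of $c$ cops left every component with at most $2^{n-1}$ vertices, one could group the non-cop vertices into two sets each of size at most $2^{n-1}$, and Harper's theorem applied to the larger of them (of size at least $\tfrac12(2^n-c)\ge\sum_{i\le n/2-1}\binom ni$) would force its closed neighbourhood to have at least $\sum_{i\le n/2}\binom ni=2^{n-1}+\tfrac12\binom n{n/2}$ vertices, so its external boundary --- which lies in the cop set --- would have at least $\tfrac12\binom n{n/2}>c$ vertices, a contradiction. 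Granting this, the small component $H$ has $t$ vertices for some $t<2^{n-1}$, and I would suppose for contradiction that $t\ge\tfrac12\binom n{n/2}$. As every vertex outside $H$ adjacent to $H$ must be a cop, $\size{N[V(H)]}-t\le c$, and the goal is to contradict this.

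Next I would extract the isoperimetric lower bound on the vertex boundary of $H$. Let $r$ be the largest integer with $\sum_{i=0}^{r}\binom ni\le t$; then $t<2^{n-1}<\sum_{i\le n/2}\binom ni$ forces $r\le n/2-1$, and $s:=t-\sum_{i=0}^{r}\binom ni$ lies in $[0,\binom n{r+1})$. By Harper's theorem, $\size{N[V(H)]}$ is at least the size of the closed neighbourhood of the initial $t$-segment of the simplicial order on $V(Q_n)$; that segment is the Hamming ball of radius $r$ together with $s$ vertices of layer $r+1$, and its closed neighbourhood is all of layers $0,\dots,r+1$ together with the layer-$(r+2)$ vertices having a neighbour among those $s$ vertices. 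Double-counting the $s(n-r-1)$ edges running from those $s$ vertices up into layer $r+2$ (each layer-$(r+2)$ vertex receiving at most $r+2$ of them) bounds the latter set below by $s(n-r-1)/(r+2)$, and hence
\[ \size{N[V(H)]}-t\ \ge\ \binom n{r+1}+s\cdot\frac{n-2r-3}{r+2}. \]
If $r\le n/2-2$, the coefficient of $s$ is nonnegative, so $\size{N[V(H)]}-t\ge\binom n{r+1}$ and thus $\binom n{r+1}\le c$. If $r=n/2-1$, then since $s<\binom n{r+1}=\binom n{n/2}$ the right-hand side is at least $\binom n{n/2}\bigl(1-\tfrac1{n/2+1}\bigr)$, which exceeds $c$ for large $n$ --- already a contradiction. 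So we are left with the case $r\le n/2-2$, $\binom n{r+1}\le c$.

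Finally I would turn $\binom n{r+1}\le c$ into a contradiction. Because the binomials decay at least geometrically away from the centre, $\sum_{i=0}^{r+1}\binom ni\le\binom n{r+1}\cdot\tfrac{n}{n-2r-1}$, while maximality of $r$ gives $\sum_{i=0}^{r+1}\binom ni>t\ge\tfrac12\binom n{n/2}$; hence $\binom n{r+1}>\tfrac{n-2r-1}{2n}\binom n{n/2}$. Combining this with $\binom n{r+1}\le c=2^{n-3}/(n\ln n)$ and the estimate $\binom n{n/2}=\Theta(2^n/\sqrt n)$ forces $a:=\tfrac n2-(r+1)=O(\sqrt n/\ln n)$. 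Feeding $a$ of this order into the elementary lower bound $\binom n{n/2-a}\ge\binom n{n/2}\exp\!\bigl(-\tfrac{2a^2}{n-2a}\bigr)$ gives $\binom n{r+1}\ge\tfrac12\binom n{n/2}$ once $n$ is large, whence $c\ge\tfrac12\binom n{n/2}$ --- false for large $n$, since $c/\binom n{n/2}=\Theta\bigl(1/(\sqrt n\,\ln n)\bigr)\to0$.

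The step I expect to be the main obstacle is this last one. One needs a lower bound on the single coefficient $\binom n{r+1}$, but the naive estimate $\sum_{i\le r+1}\binom ni\le(r+2)\binom n{r+1}$ is off by a factor of order $\sqrt n$, whereas the hypothesis on $c$ leaves only a factor of order $\ln n$ of slack; controlling the partial sum by a genuine geometric series, together with the precise order of growth of the central binomial coefficient and the matching lower bound on $\binom n{n/2-a}$ for $a$ of order $\sqrt n/\ln n$, is exactly what is needed to close the gap.
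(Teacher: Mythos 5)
Your main argument (the bound on the size of the small component) is correct, and at its core it uses the same two ingredients as the paper: the external neighbourhood of the small component lies entirely in the cop set, and Harper's vertex-isoperimetric theorem. Where you differ is in the quantitative execution. The paper argues forwards: it fixes the radius $k=\tfrac n2-\tfrac12\sqrt{n(\ln n+\ln\ln n)}$, checks via Stirling that $c\le\binom{n}{k}$, concludes from Harper that the small component is at most the size of a Hamming ball of radius $k$, and bounds $\sum_{i\le k}\binom{n}{i}\le 2^n/\sqrt{n\ln n}<\tfrac12\binom{n}{n/2}$ with Okamoto's Chernoff inequality. You argue backwards, by contradiction: a set of size $t$ with $\tfrac12\binom{n}{n/2}\le t\le 2^{n-1}$ whose external boundary lies in the cop set would, by Harper together with your explicit description of the initial segment (ball of radius $r$ plus $s$ vertices of layer $r+1$) and the upper-shadow double count, have boundary at least $\binom{n}{r+1}+s\,\tfrac{n-2r-3}{r+2}$; your case split at $r=n/2-1$, the geometric partial-sum bound $\sum_{i\le r+1}\binom{n}{i}\le\binom{n}{r+1}\tfrac{n}{n-2r-1}$, and the estimate $\binom{n}{n/2-a}\ge\binom{n}{n/2}e^{-2a^2/(n-2a)}$ with $a=O(\sqrt n/\ln n)$ are all correct (I verified the shadow bound and the exponential ratio bound). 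What your route buys is self-containedness -- elementary inequalities in place of Stirling asymptotics and a quoted Chernoff bound -- and explicit care with partial layers; the paper's route is shorter once those two analytic facts are cited.

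The one genuinely flawed step is the preliminary claim that if every component of $Q_n$ minus the cops had at most $2^{n-1}$ vertices, then the non-cop vertices could be grouped into two unions of components each of size at most $2^{n-1}$. As a statement about an abstract collection of component sizes this is false: three components of size roughly $2^n/3$ are each below $2^{n-1}$, yet any grouping into two parts puts two of them together and exceeds $2^{n-1}$. The repair is easy and uses only your own machinery: your main computation, applied to a single component, already shows that no component can have size in $[\tfrac12\binom{n}{n/2},\,2^{n-1}]$; hence if no component exceeded $2^{n-1}$, every component would have size below $\tfrac12\binom{n}{n/2}$, and a greedy prefix of components then yields a union $U$ with $\sum_{i\le n/2-1}\binom{n}{i}\le|U|\le 2^{n-1}$, to which Harper applies exactly as you intended, giving boundary at least $\tfrac12\binom{n}{n/2}>c$ inside the cop set, a contradiction. (Alternatively, note that the paper takes the large/small decomposition as given in the statement of the lemma, so this existence step, while worth making explicit, is not strictly required for the lemma as used.)
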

\begin{proof}
It is a well-known consequence of Stirling's approximation that $\binom{n}{n/2} \sim \frac{2^n}{\sqrt{n\pi/2}}$; we will make use of this fact below.

Let $k = \frac{n}{2}-\frac{1}{2}\sqrt{n (\ln n + \ln \ln n)}$.  We claim that $c \le \binom{n}{k}$.  Stirling's approximation yields (see \cite{Spe14}, p. 66, equation (5.41)): 
\begin{align*}
\binom{n}{k} &\sim \binom{n}{n/2}\exp\left (-(\sqrt{n(\ln n + \ln \ln n)})^2/(2n)\right )\\
             &= \binom{n}{n/2}\exp\left (-\frac{1}{2} (\ln n + \ln \ln n)\right )\\
             &= \binom{n}{n/2} \cdot \frac{1}{\sqrt{n\ln n}}\\
             &\sim \frac{2^n}{\sqrt{n\pi/2}}\cdot\frac{1}{\sqrt{n\ln n}}\\
             &=\frac{1}{\sqrt{\pi/2}}\cdot \frac{2^n}{n\sqrt{\ln n}}.
\end{align*}
Thus $c \le \binom{n}{k}$ provided that $n$ is sufficiently large.  Since the cops occupy the boundary of the small component -- that is, the set of vertices in the small component with neighbors in the large component -- we have that the boundary of the small component has size at most $\binom{n}{k}$.  Consequently, by Harper's theorem, the size of the small component cannot exceed the size of a Hamming ball with boundary of size $\binom{n}{k}$, which is $\sum_{i=0}^k \binom{n}{i}$.   We bound this sum using the Chernoff bound, in a form originally stated by Okamoto (\cite{Oka59}, Theorem 1).  Let $X$ be a binomial random variable with parameters $n$ and $p=1/2$.  Now 
\begin{align*}
\sum_{i=0}^k \binom{n}{i} &= 2^n \cdot P(X \le k)\\
                          &= 2^n \cdot P\left (X \le \left (\frac{1}{2}-\frac{1}{2}\sqrt{\frac{\ln n + \ln \ln n}{n}}\right )n\right )\\ 
                          &\le 2^n\exp\left (-2\left(\frac{1}{2}\sqrt{\frac{\ln n+\ln\ln n}{n}}\right)^2 n\right)\\ 
                          &= 2^n\exp\left (-\frac{1}{2}(\ln n+\ln\ln n)\right )\\
                          &=\frac{2^n}{\sqrt{n\ln n}}.
\end{align*}
In particular, the size of the small component is
%$\displaystyle o\left(\binom{n}{n/2}\right)$ and hence 
less than $\frac{1}{2}\binom{n}{n/2}$ for sufficiently large $n$. 
\end{proof}

We are now ready to establish a lower bound on $\cinf(Q_n)$.

\begin{theorem}\label{thm:hypercubes}
For sufficiently large $n$, we have $\displaystyle c_{\infty}(Q_n) \ge \frac{2^{n-3}}{n \ln n}$.%\eta \cdot \frac{2^n}{n \ln n}$.
\end{theorem}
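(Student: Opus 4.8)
The plan is to give the robber a ``large-component'' strategy in the spirit of Theorem~\ref{thm:3D_grid_lower}. Suppose the cops play on $Q_n$ with $c$ cops, where $c \le \frac{2^{n-3}}{n\ln n}$; I will show the robber can evade them forever, so that $\cinf(Q_n) > c$. For a set $C$ of vertices, write $N[C]$ for its closed neighbourhood. Letting $v_0$ be the robber's initial position and $v_i$ his position after his $i$th move, the robber will maintain, exactly as in Theorem~\ref{thm:3D_grid_lower}, that (1) $v_i$ lies in the large component of $Q_n$ before the cops' $(i+1)$st move, (2) no cop is adjacent to $v_i$, and (3) $v_i$ stays in the large component no matter how the cops move on their $(i+1)$st turn. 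As argued there, these three properties together guarantee that the robber can never be caught, so it suffices to show he can always choose such a $v_i$. The quantity playing the role of a potential function is the number of vertices in the component of $Q_n \setminus N[C]$ containing the robber, where $C$ is the current set of cop positions; the robber will always keep this at least $\frac12\binom{n}{n/2}$.

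The key point is the claim: \emph{if $C \subseteq V(Q_n)$ with $|C| \le \frac{2^{n-3}}{n\ln n}$, then $Q_n \setminus N[C]$ has a component on at least $\frac12\binom{n}{n/2}$ vertices.} To prove it I would first bound $|N[C]|$. By Harper's theorem~\cite{Har66}, $|N[C]|$ is at most the size of the closed neighbourhood of a Hamming ball of size $|C|$; since (as verified inside the proof of Lemma~\ref{lem:cube_small_component}) $|C| \le \binom{n}{k}$ for $k = \frac n2 - \frac12\sqrt{n(\ln n + \ln\ln n)}$, that ball lies in the radius-$k$ Hamming ball, whose neighbourhood is the radius-$(k+1)$ ball, and the Chernoff estimate used in the proof of Lemma~\ref{lem:cube_small_component} then yields $|N[C]| = O\!\left(\tfrac{2^n}{\sqrt{n\ln n}}\right) = o\!\left(\binom{n}{n/2}\right)$, in particular $|N[C]| < \tfrac12\binom{n}{n/2}$ for large $n$. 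Now suppose toward a contradiction that every component of $Q_n \setminus N[C]$ had fewer than $\tfrac12\binom{n}{n/2}$ vertices. Assigning the components greedily to two classes so as to keep the class totals balanced gives a partition $V(Q_n) = A \sqcup N[C] \sqcup B$ with no edge between $A$ and $B$ and $\big||A| - |B|\big| < \tfrac12\binom{n}{n/2}$; taking $A$ to be the smaller class, $|A| \le 2^{n-1}$ and $|A| \ge 2^{n-1} - \tfrac14\binom{n}{n/2} - \tfrac12|N[C]| \ge 2^{n-1} - \tfrac12\binom{n}{n/2} = \sum_{i<n/2}\binom{n}{i}$. As no edge joins $A$ to $B$, $N[A] \subseteq A \cup N[C]$, so $|N[A]| \le |A| + |N[C]| < 2^{n-1} + \tfrac12\binom{n}{n/2}$. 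But $|A|$ is at least the size of the radius-$(n/2-1)$ Hamming ball, so the Hamming ball of size $|A|$ contains that ball, hence its neighbourhood contains the radius-$n/2$ ball, and by Harper's theorem $|N[A]| \ge \sum_{i\le n/2}\binom{n}{i} = 2^{n-1} + \tfrac12\binom{n}{n/2}$ — contradicting the previous line and proving the claim.

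Granting the claim, the robber picks $v_i$ as follows. After the cops' $i$th move, with cops at $C$, let $M$ be a component of $Q_n \setminus N[C]$ with $|M| \ge \tfrac12\binom{n}{n/2}$, and let $v_i$ be any vertex of $M$. Since $M$ avoids $N[C] \supseteq C$, it is a connected subgraph of $Q_n \setminus C$, so it lies in one component of $Q_n\setminus C$ of size at least $\tfrac12\binom{n}{n/2}$; by Lemma~\ref{lem:cube_small_component} this component is not the small component, so it is the large component — this gives property (1). For $i>0$, $v_{i-1}$ also lies in the large component of $Q_n\setminus C$ (by property (3) from the previous round), so there is a cop-free path from $v_{i-1}$ to $v_i$ and the robber can legally move there. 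Property (2) holds since $M \cap N[C] = \emptyset$. Finally, if $C'$ is the cop set after their $(i+1)$st move then $C' \subseteq N[C]$, so $M$ still avoids $C'$, remains connected, and lies in a component of $Q_n\setminus C'$ of size at least $\tfrac12\binom{n}{n/2}$, which by Lemma~\ref{lem:cube_small_component} (applied to the at most $c$ cops at $C'$) is the large component — so property (3) holds. Chaining properties (1)--(3) over all $i$ as in Theorem~\ref{thm:3D_grid_lower} shows the robber evades $c$ cops forever.

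The conceptual step I would highlight is the decision to take the robber's ``safe region'' to be a component of $Q_n \setminus N[C]$ rather than of $Q_n \setminus C$: this one extra layer of padding is exactly what makes the region robust to a single step of cop movement (so that Lemma~\ref{lem:cube_small_component} can be re-applied after the cops move). The main technical obstacle is the estimate $|N[C]| = o\!\left(\binom{n}{n/2}\right)$: this is where the leading constant $\tfrac18$ in the bound comes from, and it is essential, since passing from $C$ to $N[C]$ must not inflate the set past the point where Lemma~\ref{lem:cube_small_component} still applies. One should also confirm that the several ``$n$ sufficiently large'' hypotheses (the one from Lemma~\ref{lem:cube_small_component} and the requirement $|N[C]| < \tfrac12\binom{n}{n/2}$) are simultaneously satisfiable, which they are, each being an eventual inequality.
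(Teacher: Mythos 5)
Your overall game-theoretic scaffolding (properties (1)--(3), uniqueness of the large component, re-applying Lemma \ref{lem:cube_small_component} after the cops move) is sound, but the proof of your key claim has a fatal error: you apply Harper's theorem in the wrong direction. Harper's theorem says that among all sets of a given size, Hamming balls \emph{minimize} the closed neighbourhood; it therefore gives a \emph{lower} bound on $|N[C]|$ for an arbitrary cop set $C$, not the upper bound you assert. There is no useful upper bound: if the cops occupy $|C| \le \frac{2^{n-3}}{n\ln n}$ vertices pairwise at distance at least $3$ (such sets exist in abundance, e.g.\ inside a Hamming code), their closed neighbourhoods are disjoint and
$$|N[C]| = (n+1)|C| \approx \frac{2^{n-3}}{\ln n},$$
which is \emph{larger} than $\binom{n}{n/2} \sim 2^n\sqrt{2/(\pi n)}$ by a factor of order $\sqrt{n}/\ln n$, not $o\bigl(\binom{n}{n/2}\bigr)$ as you claim. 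Consequently the inequality $|N[C]| < \tfrac12\binom{n}{n/2}$ on which your greedy-bipartition/isoperimetry contradiction rests is false, and the central claim --- that $Q_n\setminus N[C]$ always has a component of size at least $\tfrac12\binom{n}{n/2}$ --- is left unproven. This is not a repairable detail within your framework: the whole point of the hypothesis $c \le \frac{2^{n-3}}{n\ln n}$ is that the \emph{cops themselves} (who form the boundary of the small component in Lemma \ref{lem:cube_small_component}) are few, and passing to the padded set $N[C]$ inflates the separator by a factor of $n+1$, destroying exactly the comparison with $\binom{n}{k}$ that makes the isoperimetric argument work.

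The paper circumvents this obstacle with a different mechanism: a potential function in which a cop at distance $d$ from the robber contributes $\binom{n}{d-1}^{-1}$. Keeping the total potential below $1/2$ is a condition on the cops' positions that survives one cop move by design (a cop now at distance $d$ was at distance at least $d-1$ before), and a counting argument over the $\frac{n!}{(n/2)!}$ shortest paths from the robber to the middle layer shows that fewer than half of them can be blocked, so the robber still reaches at least $\tfrac12\binom{n}{n/2}$ vertices and hence stays in the large component; a separate averaging argument shows at least $2^{n-1}$ vertices always have potential below $1/2$, so a suitable destination exists in the large component. If you want to salvage your ``safe region'' idea, you would need to prove the structural claim about $Q_n\setminus N[C]$ by some other means (it does not follow from Lemma \ref{lem:cube_small_component}), and it is far from clear that it is even true at this cop density; the potential-function route is precisely the paper's substitute for that missing padding argument.
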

\begin{proof}
We give a strategy for the robber to avoid $\frac{2^{n-3}}{n \ln n}-1$ or fewer cops.  Intuitively, on each robber turn, the robber wants to move to a vertex with few enough cops nearby that the cops cannot cut off the robber's escape routes.  To formalize the robber's strategy, we use a potential function.  Suppose the robber currently occupies vertex $v$.  For a cop $c$ at vertex $u$, define that cop's \textit{potential} $P(c)$ as follows.  If in fact $u=v$, meaning that the cop and robber occupy the same vertex, then $P(c) = 1$; otherwise, $P(c) = \binom{n}{\textrm{dist}(u,v)-1}^{-1}$.  Moreover, we define the \textit{total potential}, $\Phi$, by $\Phi = \sum_c P(c),$ where the sum is taken over all cops $c$. 

Note that any cop on or adjacent to the robber's vertex contributes 1 to the total potential.  Hence, if $\Phi < 1$ at the end of a robber turn, then there are no cops in the closed neighborhood of the robber's vertex, so he cannot be captured on the next cop turn.  In general, the smaller the value of $\Phi$, the fewer cops there are ``near'' the robber; as such, the robber will aim to keep $\Phi$ as small as possible.   Before outlining the robber's strategy, we prove the following statement: if $\Phi < 1/2$ at the end of a robber turn, then the robber's vertex belongs to the large component of $Q_n$, and it will continue to belong to the large component even after the next cop turn.  

Suppose $\Phi < 1/2$ at the end of a robber turn and suppose (by symmetry) that the robber currently occupies the vertex whose coordinates are all zero.  We claim that this vertex is currently in the large component and remains in the large component after the cops take their turn.  By Lemma \ref{lem:cube_small_component}, the size of the small component is strictly less than $\frac{1}{2}\binom{n}{n/2}$.  Hence it suffices to show that, after the cops' move, the robber can reach at least this many vertices.  

Let $S$ denote the set of all vertices at distance $n/2$ from the robber and let $\mathcal{P}$ denote the collection of all shortest paths from the robber's vertex to a vertex in $S$.  As the robber traverses such a path, each step corresponds to changing some coordinate of his position from 0 to 1.  Consequently, each path in $\mathcal{P}$ corresponds to changing $n/2$ coordinates from 0 to 1, in some order; there are $\binom{n}{n/2}$ ways to decide which coordinates to change and $(n/2)!$ possible orders in which to change them, so 
$$\size{\mathcal{P}} = \binom{n}{n/2} \cdot (n/2)! = \frac{n!}{(n/2)!(n/2)!}\cdot (n/2)! = \frac{n!}{(n/2)!}.$$

Not all of these paths can actually be traversed by the robber, because they may contain one or more vertices occupied by cops.  Say that a path in $\mathcal{P}$ is {\it blocked} by a cop if that cop occupies some vertex on the path.  We claim that if $\Phi < 1/2$ at the end of the robber's turn, then fewer than half of the paths will be blocked after the cops' ensuing turn.  To see this, first note that a cop at distance $k$ from the robber's vertex (where $k \le n/2$) blocks exactly $k! \cdot \binom{n-k}{n/2-k} \cdot (n/2-k)!$ paths in $\mathcal{P}$: there are $k!$ shortest paths from the robber's vertex to the cop's, and there are $\binom{n-k}{n/2-k} \cdot (n/2-k)!$ shortest paths from the cop's vertex to a vertex in $S$.  This quantity simplifies:
$$k! \cdot \binom{n-k}{n/2-k} \cdot (n/2-k)! = k! \cdot \frac{(n-k)!}{(n/2-k)!(n/2)!} \cdot (n/2-k)!
                                           = \frac{k!(n-k)!}{(n/2)!}.$$
The total number of paths blocked by the cops is at most the sum of this quantity over all cops.  For a cop $c$, let $d(c)$ denote that cop's distance from the robber's vertex.  We now have
\begin{align*}
\textrm{Total paths blocked } &\le \sum_c \frac{(d(c))!(n-d(c))!}{(n/2)!}\\
                              &=   \sum_c \frac{n!}{n!} \cdot \frac{(d(c))!(n-d(c))!}{(n/2)!}\\
                              &=   \frac{n!}{(n/2)!} \sum_c \cdot \frac{1}{\binom{n}{d(c)}}\\
                              &\le \frac{n!}{(n/2)!} \sum_c P(c)\\
                              &=   \frac{n!}{(n/2)!} \cdot \Phi\\
                              &< \frac{n!}{(n/2)!} \cdot \frac{1}{2},
\end{align*}
where the inequality on the fourth line follows because a cop that is currently at distance $d(c)$ was at distance $d(c)+1$ or less prior to the cops' turn and thus contributed at least $\binom{n}{d(c)+1-1}^{-1}$ to $\Phi$.

Consequently, at least half of the shortest paths from the robber's vertex to $S$ are unblocked, so the robber can reach at least half of the vertices in $S$.  Since $\size{S} = \binom{n}{n/2}$, the robber cannot be in the small component (which, as mentioned above, has size less than $\frac{1}{2}\binom{n}{n/2}$).

We next claim that on each robber turn, if the robber is within the large component of $Q_n$, then he can move so as to make the total potential less than $1/2$.  Let $\Phi_v$ denote the value of the total potential when the robber moves to $v$; we aim to show that $\Phi_v < 1/2$ for some vertex $v$ in the large component.  We begin by computing $\sum_{v \in V(G)} \Phi_v$.  For each cop $c$ and for all $k \in \{0, 1, \dots, n\}$, there are $\binom{n}{k}$ vertices at distance $k$ from $c$; $c$ exerts a potential of $\binom{n}{k-1}^{-1}$ on each such vertex provided $k \ge 1$ and a potential of 1 otherwise.  Consequently, between all of the vertices in the graph, the cop exerts an aggregate potential of 
\begin{align*}
1 + \sum_{k=1}^{n} \frac{\binom{n}{k}}{\binom{n}{k-1}}
      &= 1 + \sum_{k=1}^n \frac{n-k+1}{k}\\
      &= 1 + \sum_{k=1}^n \left (\frac{n+1}{k} - 1\right )\\
      &= 1 - n + (n+1)\sum_{k=1}^n \frac{1}{k}\\ 
      &\le 1 - n + (n+1)(\ln n + 1)\\
      &\le 2n\ln n \quad \text{(provided $n \ge 3$)}.
\end{align*}
Since there are $\frac{2^{n-3}}{n \ln n}$ cops, the aggregate potential exerted by all cops is at most
$$\frac{2^{n-3}}{n\ln n} \cdot 2n\ln n = 2^{n-2}.$$
Consequently, there are at most $2^{n-1}$ vertices $u$ such that $\Phi_u \ge 1/2$, so there are at least $2^{n-1}$ vertices $v$ such that $\Phi_v < 1/2$.  Since the large component contains more than $2^{n-1}$ vertices, we have $\Phi_{v^*} < 1/2$ for at least one vertex $v^*$ in the large component.  Since the robber is currently in the large component, there is a cop-free path from his current vertex to $v^*$; consequently, the robber may move to $v^*$.

Finally, we explain the robber's actual strategy.  Initially, the robber begins at any vertex $v$ such that $\Phi_v < 1/2$.  Note that the cops cannot capture the robber on their ensuing turn, since any cop within distance 1 of the robber would contribute 1 to the potential, contradicting $\Phi_v < 1/2$.  Moreover, since the potential is less than 1/2, the robber will be in the large component after the cops' move.  On the robber's next turn, he moves to a vertex $v^*$ such that $\Phi_{v^*} < 1/2$.  he then repeats this process indefinitely, repeatedly moving so as to keep the potential below $1/2$. 
\end{proof}

\end{section}

\begin{section}{Future Work}\label{sec:conclusion}
   
    We conclude by proposing several open problems and directions for future study of the infinite-speed game.
    
    \begin{itemize}
    \item Theorem \ref{thm:2D_grids} determines $\cinf(P_n \cart P_n)$ exactly when $n$ is odd or 2.  However, when $n$ is even and at least 4, we know only that $\cinf(P_n \cart P_n) \in \{n-1,n\}$.  What is the exact value of $\cinf(P_n \cart P_n)$ is in this case?  (We suspect that the answer should be $n$ for all even values of $n$.)\\
    
    \item Similarly, Theorems \ref{thm:3D_grid_upper} and \ref{thm:3D_grid_lower} show that $0.7172n^2\le \cinf(P_n\cart P_n\cart P_n)\le (0.75+o(1))n^2$.  What are the exact asymptotics of $\cinf(P_n \cart P_n \cart P_n)$?  (We suspect that in fact $\cinf(P_n\cart P_n\cart P_n) = (0.75+o(1))n^2$.)\\
    
    Tightening the lower bound in Theorem \ref{thm:3D_grid_lower} would also improve Corollary \ref{cor:treewidth}; if one could prove that in fact $\cinf(P_n\cart P_n\cart P_n) = (0.75+o(1))n^2$, then it would follow that $\mathrm{tw}(P_n\cart P_n\cart P_n) = (0.75+o(1))n^2$.\\
            
    \item Theorem \ref{thm:4D_grid_upper} gives a general upper bound for $\cinf(G)$ when $G$ is the $d$-fold Cartesian product of $P_n$.  While we do not have a closed form for the summation in this upper bound, empirical data suggests that the bound is proportional to $n^{d-1}/\sqrt{d}$.  It would be interesting to understand how the asymptotics of $\cinf(G)$ change with $d$.  In particular, is there some constant $c$ such that $\cinf(G)=(c+o(1))n^{d-1}/\sqrt{d}$?\\
    
    \item All of our results involve Cartesian products of paths and cycles.  What can be said about the behavior of $\cinf$ under other graph products, such as the strong or categorical products?\\
    
       % \item In all variants of cops and robbers, determining a bound for the cop number on the class of planar graphs has been a popular problem.  Theorem \ref{thm:2D_grids} shows that the maximum value of infinite-speed cop number of an $n$-vertex planar graph is at least $\sqrt{n}-1$, while a simple argument involving the planar separator theorem shows that it is $O(\sqrt{n})$.  It would be interesting to determine tighter bounds.\\
    \end{itemize}
%    In particular, we wonder if there is a natural way to extend the techniques in Theorem \ref{thm:3D_grid_lower} to find lower a lower bound for the $n$-dimensional Cartesian grid if $n>3$.  We also wonder if one can find nontrivial bounds on the categorical or strong products of paths using analogues of the above proof ideas.  In all variants of cops and robbers, determining a bound for the cop number on the class of planar graphs has been a popular problem.  Theorem \ref{thm:2D_grids} shows that $\cinf$ can be as large as $O(\sqrt{n})$, but it is unknown whether or not $\cinf$ can be larger for a planar graph.
\end{section}

\section*{Acknowledgements}
The authors are grateful to the anonymous referees for their helpful comments, which have significantly improved the paper.

\end{document}